\documentclass[a4paper,12pt]{article}

\usepackage{amsmath}
\usepackage{amsthm}
\usepackage{amsfonts}
\usepackage{amssymb}
\usepackage{subeqnarray}
\usepackage{mathrsfs}
\usepackage{latexsym}

\usepackage[margin=2cm]{geometry}

\usepackage{graphicx}
\usepackage{color}
\usepackage{pstricks,psfrag}
\usepackage{epsfig}
\usepackage{subfigure}



\newcommand{\ud}{\mathrm{d}}
\newcommand{\beq}{\begin{equation}}
\newcommand{\eeq}{\end{equation}}
\newcommand{\ba}{\begin{eqnarray}}
\newcommand{\ea}{\end{eqnarray}}
\newcommand{\bs}{\boldsymbol}



\newtheorem{theorem}{Theorem}[section]
\newtheorem{remark}{Remark}[section]

\newtheorem{proposition}{Proposition}[section]
\newtheorem{lemma}{Lemma}[section]
\newtheorem{definition}{Definition}[section]

\title{The steady state configurational distribution diffusion equation of the standard FENE dumbbell polymer model: existence and uniqueness of solutions for arbitrary velocity gradients.}

\author{Ionel Sorin Ciuperca $^1$ and Liviu Iulian Palade $^2$}


\begin{document}

\maketitle

\begin{flushleft}

Universit\'e de Lyon, CNRS\\

$^1$ Universit\'e Lyon 1, Institut Camille Jordan UMR5208, B\^at Braconnier, 43 Boulevard du 11 Novembre 1918, F-69622, Villeurbanne, France.

$^2$ INSA-Lyon, Institut Camille Jordan UMR5208 \& P\^ole de Math\'ematiques, B\^at. Leonard de Vinci No. 401, 21 Avenue Jean Capelle, F-69621, Villeurbanne, France.

\end{flushleft} 

\begin{abstract}

The configurational distribution function, solution of an evolution (diffusion) equation of the Fokker-Planck-Smoluchowski type, is (at least part of) the corner stone of polymer dynamics: it is the key to calculating the stress tensor components.  This can be reckoned from \cite{bird2}, where a wealth of calculation details is presented regarding various polymer chain models and their ability to accurately predict viscoelastic flows.  One of the simplest polymer chain idealization is the Bird and Warner's model of finitely extensible nonlinear elastic (FENE) chains.  In this work we offer a proof  that the steady state configurational distribution equation has unique solutions irrespective of the (outer) flow velocity gradients (i.e. for both slow and fast flows).  

\end{abstract} 

\begin{flushleft}

Keywords:  FENE dumbbell chains; Fokker-Planck-Smoluchowski equation; existence and uniqueness of solutions; slow and fast viscoelastic flows;  Krein-Rutman theorems; \\

\end{flushleft}

\section{Introduction}\label{intr}

The viscoelastic flow behavior of polymeric liquids is strongly influenced by the complexity of various inter and intra molecular interactions. At microscopic level, long chain entanglements are a consequence of chain connectivity and backbone uncrossability due to intermolecular repulsive exclusive volume forces. Macromolecules diffusion (and conformational relaxation) is slowed down due to hydrodynamic drag and Brownian forces.   

Bird, Curtiss, Armstrong and Hassager, together with their collaborators (see \cite{bird2} and references cited therein), enriched significantly Kirkwood's early ideas \cite{kirk1} and produced a general kinetical theoretical framework for both diluted and concentrated polymeric systems.  Here, the macromolecules are modeled as freely jointed bead-rod or bead-spring chains.  One of the simplest version of this chain model is the (now popular) Bird - Warner's elastic dumbbell chain, that consists of two beads connected by a {\bf F}initely {\bf E}xtensible {\bf N}onlinear {\bf E}lastic - aka FENE - spring.  The salient features of this model, of relevance to this work, are briefly reviewed below, for sake of clarity.  

Let $\tilde{{\bf x}}\in\mathbb{R}^n,\,n=2,3$, denote the (microscopic)
dumbbell connector vector, ${\bf y}\in\mathbb{R}^n$ the (macroscopic)
Eulerian position vector.  In the absence of inertia and of external
forces, the balance of hydrodynamic, Brownian and intermolecular
forces results in the so-called Fokker-Planck-Smoluchowski 
with Dirichlet boundary condition for the
 configurational 
function 
\\
$\tilde{\psi}(t,{\bf y},\tilde{{\bf x}})$ diffusion equation.  In dimensionless form it looks:
\begin{subeqnarray}
& & \frac{\partial \tilde{\psi}}{\partial t}+{\bf u}\cdot\nabla_{\bf y}\tilde{\psi}=\nabla_{\tilde{{\bf x}}}\cdot\left[-{\bs \theta}\tilde{{\bf x}}\tilde{\psi} +\frac{1}{2\text{De}}\nabla_{\tilde{{\bf x}}}\tilde{\psi}+\frac{1}{2\text{De}}{\bf F}^{(c)}(\tilde{{\bf x}})\tilde{\psi}\right],\,\text{over}\,B(0,\tilde{\delta}) \slabel{d1}\\
& & \tilde{\psi}|_{\partial B(0,\tilde{\delta})}=0 \slabel{d2}
\end{subeqnarray}
In the above equation, $B(0,\tilde{\delta})$ is the open ball of
radius $\tilde{\delta}$ centered at $0$,  De is the Deborah number
and   ${\bs \theta}=(\nabla_{\bf y} {\bf u})^{T}$ is a tensor which represents
the (macroscopic) velocity gradient; the corresponding term accounts for the flow type.  The second term in the $rhs$ represents the statistically averaged Brownian force due to thermal fluctuations in the liquid.  The last term, ${\bf F}^{(c)}$, is the elastic force that accounts for the dumbbell's elastic response to  strain input, for which Warner \cite{war1} proposed the following expression (valid for $\|{\bf x}\|<\tilde{\delta}$, with $\tilde{\delta}$ a polymer depending parameter):
\beq\label{war}
{\bf F}^{(c)}(\tilde{{\bf x}})=\frac{\tilde{{\bf x}}}{1-(\|\tilde{{\bf x}}\|/\tilde{\delta})^{2}}
\eeq 
The above is commonly called the FENE force.  Now, as an aside, the model is quite flexible in that it may sustain other types of elastic forces: e.g. Peterlin's force (actually a linearized version of eq.\eqref{war}) usually referred to as FENE-P (see \cite{chil,her}):
\beq\label{petl}
{\bf F}^{(c)}(\tilde{{\bf x}})=\frac{\tilde{{\bf x}}}{1-<\|\tilde{{\bf x}}\|^{2}>/\tilde{\delta}^{2}}=\frac{\tilde{{\bf x}}}{1-<\textrm{tr}(\tilde{{\bf x}}\otimes\tilde{{\bf x}})>/\tilde{\delta}^{2}}
\eeq 
Asymptotic solutions to the diffusion equation are known for some steady state flows: see \cite{bird2} (for concise presentations see  \cite{lar, ber, hui1, faith, ott1}).  They were obtained through series expansions about the (known) equilibrium function $\tilde{\psi}_{eq}(\tilde{\bf{x}})$. 

Next, let $(t,{\bf y})\in \mathbb{R}_+\times\left(Q\subset\mathbb{R}^n \right)$.  The momentum balance equation reads (see \cite{bird1}):
\begin{subeqnarray}  
& & \frac{\partial {\bf u}}{\partial t}+\left( {\bf u}\cdot\nabla_{\bf
    y}\right) {\bf u}= \nu \Delta u -\nabla_{\bf y}p+\nabla_{\bf
  y}\cdot{\bf S},
\quad \text{over}\, \mathbb{R}_+\times Q \slabel{mb1}\\
& & \nabla_{\bf y}\cdot {\bf u}=0,\quad \text{over}\, \mathbb{R}_+\times Q \slabel{mb2}
\end{subeqnarray}
where $\nu > 0, \;
{\bf u}={\bf u}(t,{\bf y})\in\mathbb{R}^n$, $p=p(t,{\bf y})\in \mathbb{R}$.  ${\bf S}(t,{\bf y})\in \text{Sym}(\mathbb{R})$ is the symmetric extra stress tensor given by (\cite{bird2}):

\beq\label{ce}
{\bf S}(t,{\bf y})= \mu \left [ \int_{B(0,\tilde{\delta})}\tilde{{\bf
    x}}\otimes{\bf F}(\tilde{{\bf x}})\tilde{\psi}(t,{\bf
  y},\tilde{{\bf x}})\ud \tilde{{\bf x}}-
\int_{B(0,\tilde{\delta})} \tilde{\psi}(t,{\bf
  y},\tilde{{\bf x}})\ud \tilde{{\bf x}} \; {\bf I} \right ]
\eeq
where $\mu > 0 $ is a fluid related parameter (actually a given constant).

One observes that whenever the velocity gradient is such that
$\partial u_i/\partial y_j=a_{ij}$ = constant, $\sum_i a_{ii}=0 $, and 
$\tilde{\psi}$ is a solution of
\eqref{d1}-\eqref{d2}, then $ {\bf S}$  defined in equation \eqref{ce} is always
independent of ${\bf y}$, hence $\nabla_{{\bf
      y}}\cdot{\bf S}=0$.  
In such a situation there exist ${\bf u}$ and $p$ 
so that \eqref{mb1}-\eqref{mb2} are solved.  That this is
indeed the case may be inferred from the following.  Using Einstein's
summation convention over dummy indices, $u_i=a_{ij}y_j+c$, therefore
$\displaystyle \frac{\partial u_i}{\partial y_k}
u_k=a_{ik}\left[a_{kj}y_j+c \right]=a_{ik}a_{kj}y_j+d_i$,
$d_i=c\sum_{i,k} a_{ik}$.  Hence $\displaystyle\left[
  \nabla_{{\bf y}}\cdot({\bf u}\otimes{\bf
    u})\right]_{i}=a_{ik} a_{kj} y_j+d_i=\alpha_{ij}y_j+d_i$.
Therefore $\nabla_{{\bf y}}\cdot({\bf u}\otimes{\bf u})$ may
be expressed as $\nabla_{{\bf y}}\cdot({\bf u}\otimes{\bf
  u})=-\nabla_{{\bf y}}p$, where $p=-(1/2)\alpha_{ij}y_i
y_j-d_i y_i$, since the matrix of entries $\alpha_{ij}$ is symmetric.
We conclude that for any traceless matrix 
${\bf A}$ whose entries $a_{ij}$ are constants, and for a steady
state, homogeneous flow solution $\tilde{\psi}(\tilde{{\bf x}})$ - i.e. independent of
$t$ and 
${\bf y} $ - to 
equations \eqref{d1}-\eqref{d2},
there exists a steady state solution to \eqref{mb1}-\eqref{mb2}
 given by:
\begin{subeqnarray}
u_i(\tilde{{\bf x}}) & = & a_{ij}y_j+c  \slabel{sss1} \\
p & = & -\frac{1}{2}a_{ik}a_{kj}y_i y_j-c\sum_{i,k} a_{ik} y_i \slabel{sss2}
\end{subeqnarray}
and with ${\bf S}$ given by eq\eqref{ce}.  

For this work we shall consider ${\bf u}$ as being given by
eq\eqref{sss1}, where ${\bf A}$ is a given matrix, and we shall prove
the existence of a solution $\tilde{\psi}$, independent of $t$ and
${\bf y}$, to \eqref{d1}-\eqref{d2}.  

Before proceeding further, we pause for the following important
observation.  The solution $\tilde{\psi}$ to 
\eqref{d1}-\eqref{d2} we inquire
about - being a probability density - has to be non-trivial
($\tilde{\psi}\neq0$), non-negative and integrable.  As
$\tilde{\psi}=0$ is a solution to the aforementioned problem and as we
have to mind about non-trivial ones, the solution non-uniqueness must
be compulsory.  Next, we know from \cite{gil-tru, caj} that
$\tilde{\psi}=0$ is the unique solution to 
\eqref{d1}-\eqref{d2} whenever ${\bf F}^{(c)}$ is an element of 
$L^r(B(0,\tilde{\delta})),\,r>n$.  Therefore, what makes possible 
the existence of non-trivial solutions, is the fact 
that ${\bf F}^{(c)}$ is NOT an element of
$L^r(B(0,\tilde{\delta})),\,r>n$
(in fact ${\bf F}^{(c)}$ is not an element of
$L^r(B(0,\tilde{\delta}))$ for any $ r \geq 1$).

Now, as ${\bf F}^{(c)}=\nabla_{\tilde{{\bf x}}}U(\tilde{{\bf x}})$,
with $U(\tilde{{\bf x}})=-\left(\tilde{\delta}^{2}/2\right)\log
\left(1-\|\tilde{{\bf x}}\|^2/\tilde{\delta}^{2}\right) $,
equation  \eqref{d1} is usually re-written 
as (see \cite{dlp}):
\beq\label{dd}
 -\frac{1}{2\text{De}}\nabla_{\tilde{{\bf
       x}}}\cdot\left[\tilde{M}(\tilde{{\bf x}})\nabla_{\tilde{{\bf
         x}}}\left(\frac{\tilde{\psi}}{\tilde{M}} 
\right)\right]+\nabla_{\tilde{{\bf x}}}\cdot\left[{\bs\theta }\tilde{{\bf x}}\tilde{\psi}\right]=0
\eeq
where the function $\tilde{M}:B(0,\tilde{\delta})\to\mathbb{R}$ is given by:
\beq
\tilde{M}\left(\tilde{{\bf x}} \right)=\frac{1}{J}\left(1-\frac{\|\tilde{{\bf x}}\|^2}{\tilde{\delta}^2} \right)^{\tilde{\delta}^2/2}
\eeq
where $J $ is a normalization constant so that:
\beq\label{bvp2}
\int_{B(0,\tilde{\delta})}\tilde{M}\left(\tilde{{\bf x}} \right) \ud \tilde{{\bf x}} =1.
\eeq
Next, for sake of generality, we replace ${\bs \theta}\tilde{{\bf x}}$ by an arbitrary function $\tilde{{\bf k}}:B(0,\tilde{\delta})\to\mathbb{R}^n$, and the problem we focus on can be re-formulated as:
\begin{subeqnarray}
& & -\frac{1}{2\text{De}} \nabla\cdot \left[ \tilde{M}\nabla\left( \frac{\tilde{\psi}}{\tilde{M}}\right) \right]+\nabla\cdot\left(\tilde{{\bf k}}\tilde{\psi} \right)=0, \quad \forall\tilde{{\bf x}} \in  B(0,\tilde{\delta}) \slabel{bp31}\\
& & \tilde{\psi}|_{\partial B(0,\tilde{\delta})}=0 \slabel{bp32}\\
& & \tilde{\psi}\geq 0 \slabel{bp33}\\
& &
\int_{B(0,\tilde{\delta})}\tilde{\psi}\left(\tilde{{\bf
      x}} \right) \ud \tilde{{\bf x}} = a \slabel{bp34}
\end{subeqnarray}
where $a=\dfrac{1}{\text{meas}(Q)}$ is a given constant. 

Next, for notation convenience,
we carry out the variable change ${\bf x}=\tilde{{\bf
    x}}/\tilde{\delta}$.  This transforms the domain $\tilde{\Omega}$
into $\Omega=B(0,1)=\left\{ {\bf x}\in\mathbb{R}^n,\, \|{\bf x}\|\leq
  1 \right\}$.  
Let us denote $\psi({\bf x})=\tilde{\psi}(\tilde{{\bf x}}) $, 
${\bf k}({\bf x})=  2 \tilde{\delta} \, \text{De} \, \tilde{{\bf k}}(\tilde{{\bf x}}) $, 
$ \delta = \tilde{\delta}^2/2 $ and let
\begin{equation*}
M : \Omega \rightarrow \mathbb{R} , \;
M({\bf x}) = (1 - \|{\bf x}\|^2)^\delta. 
\end{equation*}
Then, equations \eqref{bp31}-\eqref{bp34} become in $\Omega$:
\begin{subeqnarray}
& & - \nabla\cdot \left[ M\nabla\left( \frac{\psi}{M}\right) \right]+\nabla\cdot\left({\bf k}\psi \right)=0, \quad \forall{\bf x} \in  \Omega 
\slabel{dvp31}\\
& & \psi|_{\partial \Omega} =0 \slabel{dvp32}\\
& & \psi\geq 0 \slabel{dvp33}\\
& &
\int_\Omega \psi ({\bf
      x})  \ud {\bf x} = b \slabel{dvp34}
\end{subeqnarray}
with $ b > 0 $ and $ {\bf k} : \Omega \rightarrow \mathbb{R}^n $ given.
As in practical situations $\tilde{\delta}$ is (roughly speaking) close to 10,
then $\delta$ is close to 50.  

The goal of this paper is to prove the existence and uniqueness
of a solution to the system of equations \eqref{dvp31}-\eqref{dvp34}.  
We easily
see that the aforementioned problem can be also formulated as following:
prove that $0$ is a simple eigenvalue of the operator (denoted from
now on $L$) defined by the lhs of \eqref{dvp31} and the boundary
condition \eqref{dvp32}, with a corresponding non-negative and
integrable eigenvector.  In fact, we will prove that $0$ is the
principal eigenvalue of $L$ in the sense that the real part of any
other eigenvalue of it is non-negative.  To achieve this we use the
classical Krein-Rutman theorems, in both weak and strong senses, on an
appropriate operator obtained from $L$.  
This will also entail that $\psi$ is positive over $\Omega$ and behaves like $M$ on the boundary $\partial\Omega$.  

The boundary value problem problem with unknowns ${\bf u}$ and $\tilde{\psi}$ as
presented in \eqref{d1}, \eqref{d2}, \eqref{mb1}, \eqref{mb2}, \eqref{ce}
has attracted the attention of several investigators working in the area.
For instance, in \cite{zh} Zhang and Zhang proved the 
existence of a local in time, regular solution to the system formed by equations
\eqref{d1}, \eqref{d2}, \eqref{mb1}, \eqref{mb2}, and \eqref{ce}.  The existence of a global in time solution  was proved in \cite{lzz-08} by
Lin et al, and that in a particular case referred to as the
``co-rotational''
velocity field, that is, in equation \eqref{d1} the term $ {\bs \theta} = (\nabla u)^T $
is replaced by $ {\bs \theta} = \nabla_y u - (\nabla_y u)^T $.  Moreover, for this same system of equations, in \cite{ba}, 
Barret et al offered a proof for the existence and uniqueness of a solution to a regularized problem associated to
the system \eqref{d1}, \eqref{d2}, \eqref{mb1},\eqref{mb2}, \eqref{ce}.
Next, in \cite{jlb-04} Leli\`evre et al proved the existence and uniqueness of a local in time solution to the one dimensional motion system of equations in which the  Fokker-Planck-Smoluchowski equation is
replaced by a stochastic diffusion differential equation.

In \cite{du} Du et al focused on the
Fokker-Plank-Smoluchowski evolution equation only, assuming a steady and homogeneous 
macroscopic velocity field. For this they proved the
global in time existence and uniqueness of a solution. For the corresponding steady state problem, the forementioned authors 
proved the existence of a solution only in the particular case where 
the tensor ${\bs \theta}$ in \eqref{d1} is either symmetric or antisymmetric.  Some of their numerical simulations suggest  the existence of steady-state solutions for arbitrary  ${\bs \theta}$.  

In this work we do prove the existence and uniqueness of steady state solutions for arbitrary ${\bs \theta}$.  

As an aside, in \cite{dlp} Degond et
al provided arguments in support of the validity of an asymptotic expansion 
solution, valid for small De numbers, first obtained in \cite{bird2}.  

This paper is organized as follows: 

\begin{itemize}
\item in Section  \ref{main} we state the main steady state  existence and uniqueness result,
\item Section \ref{prel} addresses some important functional analysis preliminaries,  \item Section \ref{proof} is devoted to proving the conclusive existence and uniqueness result.
\end{itemize}

\section{Functional framework. Presentation of the main result.}
\label{main}

Let the following  spaces be defined as:
\begin{eqnarray}\label{ff1}
& & L_M^2 \equiv L_M^2(\Omega):=\left \{ u\in L^1_{\text{loc}}(\Omega),\, \int_\Omega \frac{u^2}{M} \ud {\bf x}<\infty \right \} \\
& & H_M^1 \equiv H_M^1(\Omega):=\left \{ u\in L^1_{\text{loc}}(\Omega),\, \int_\Omega 
\left [\frac{u^2}{M}+M\left|\nabla \left(\frac{u}{M} \right)\right|^2 \right ] \ud {\bf x}<\infty \right \}
\end{eqnarray}
endowed with the norms
$$
\left ( \int_\Omega \frac {u^2} M \ud  {\bf x} \right )^{1/2} 
\quad \text{ and respectively }
\quad \left (
\int_\Omega 
\left [\frac{u^2}{M}+M\left|\nabla \left(\frac{u}{M} \right)\right|^2
\right ] \ud {\bf x} \right )^{1/2}.
  $$

It is clear that $L_M^2$ is a Hilbert space.
To see that $H_M^1$ is also a Hilbert space, let us remark that 
$$
H_M^1 = M \cdot {\hat H}_M^1 \quad \text { with } \quad
{\hat H}_M^1 = \left \{ v\in L^1_{\text{loc}}(\Omega),\, \int_\Omega 
(M {v^2}+M\left|\nabla v \right|^2 ) \ud {\bf x}<\infty \right \}.
 $$

It is well-known, as being a classical weighted Sobolev space, that ${\hat H}_M^1 $ is complete (see for exemple Theorem 3.2.2.(a) in Triebel's monograph \cite{trie}) when endowed with the norm

$$
\left ( \int_\Omega 
(M {v^2}+M\left|\nabla v \right|^2 ) \ud {\bf x} \right )^{1/2}.
  $$
Since the application $ \psi \in {\hat H}_M^1  \rightarrow M \psi \in
H_M^1 $ is an isometry, we deduce that $H_M^1$ is complete. 

For any $\varphi\in H_M^1(\Omega) $ we denote $|\varphi|_1$ the semi-norm on $H_M^1$ defined by
$|\varphi|_1^2:=\int_{\Omega}M\left|\nabla\displaystyle \frac{\varphi}{M} \right|^2 \ud {\bf x}$.  
Moreover, 
$\left( H_M^1(\Omega) \right)'$ denotes the corresponding dual space and one has the canonical embedding
$$
L_M^2 \subset \left( H_M^1(\Omega) \right)'.
  $$
We now endeavor to search for solutions to equations
\eqref{dvp31}-\eqref{dvp34} that are 
elements of $H_M^1$, as the trace on $\partial\Omega$
of any $ u\in H_M^1(\Omega)$ is zero (see also Proposition \ref{sfs27}).  

To achieve this goal, equation \eqref{dvp31} is first multiplied 
by $\varphi/M$, with $\varphi\in\mathcal{D}(\Omega)$ 
and next integrated over $\Omega$.  It gives:  

\beq\label{ff2}
\int_{\Omega}M \nabla\left( \frac{\psi}{M} \right) \cdot \nabla\left( \frac{\varphi}{M} \right)\ud {\bf x}
-\int_{\Omega}{\bf k}\psi \cdot \nabla\left( \frac{\varphi}{M} \right) \ud {\bf x} =0
\eeq

\begin{definition}\label{ff3}
$\psi\in H_M^1(\Omega)$ is a weak solution of the system 
\eqref{dvp31}-\eqref{dvp34}, provided that:
\beq\label{ff4}
\int_{\Omega} M \nabla\left( \frac{\psi}{M} \right) \cdot 
\nabla\left( \frac{\varphi}{M} \right)\ud {\bf x}
-\int_{\Omega}{\bf k}\psi \cdot \nabla\left( \frac{\varphi}{M} 
\right) \ud {\bf x} =0,\quad \forall\varphi\in H_M^1(\Omega)
\eeq
and moreover, \eqref{dvp33} and \eqref{dvp34}  are satisfied.

\end{definition}
Next, let the operator $L:H_M^1(\Omega)\rightarrow\left(H_M^1(\Omega)\right)' $ be defined as:
\beq\label{ff5}
\left\langle L(u),\varphi \right\rangle := \int_{\Omega} M 
\nabla\left( \frac{u}{M} \right) \cdot \nabla\left( 
\frac{\varphi}{M} \right)\ud {\bf x}
-\int_{\Omega}{\bf k} u \cdot \nabla\left( \frac{\varphi}{M} 
\right) \ud {\bf x} ,\quad \forall\varphi\in H_M^1(\Omega)
\eeq
Now, $L$ is well-defined, due to:
\begin{equation}
\nonumber
\begin{split}
& \left| \int_{\Omega} M \nabla\left( \frac{u}{M} \right) \cdot \nabla\left( \frac{\varphi}{M} \right)\ud {\bf x}
-\int_{\Omega}{\bf k} u \cdot \nabla\left( \frac{\varphi}{M} \right) \ud {\bf x} \right| \leq \\
& \leq \int_{\Omega} \left| M^{1/2} \nabla\left( \frac{u}{M} \right) \right| \cdot \left| M^{1/2} \nabla\left( \frac{\varphi}{M} \right) \right|\ud {\bf x}
+ \int_{\Omega} \left | {\bf k} \right | \left| \frac{u}{M^{1/2}} \right| M^{1/2} \left| \nabla\left( \frac{\varphi}{M} \right)\right| \ud {\bf x} \\
& \leq \|u\|_{H_M^1}\|\varphi\|_{H_M^1}+\left\| {\bf k}\right\|_{L^{\infty}}\|u\|_{L_M^2}\|\varphi\|_{H_M^1}
\end{split}
\end{equation}
It is now clear that our problem is tantamount to finding 
an element $\psi \in H^1_M$ such that
\begin{subeqnarray}
& & L \psi =0 \slabel{vp1}\\
& & \psi\geq 0 \slabel{vp2}\\
& &
\int_\Omega \psi ({\bf
      x})  \ud {\bf x} = b,\slabel{vp3}
\end{subeqnarray}
that is,  $\psi$ must by a non-negative and integrable eigenvector of 
$L$ corresponding to the eigenvalue 0.
\\
For any $\beta\geq0$, let:
\beq\label{sfs2}
X_\beta:=\left\lbrace  \varphi\in\mathcal{C}\left(
    \overline{\Omega}\right) ,
\,\exists c\geq0 \quad\text{s.t.}\quad|\varphi({\bf x})|\leq 
cM^\beta({\bf x}),\,\forall{\bf x}\in\Omega  \right\rbrace.
\eeq

$X_\beta$ is a Banach space endowed with the norm

\beq\label{sfs3}
\|\varphi\|_{X_\beta}:=\displaystyle\mathop{\sup}_{{\bf x}\in\Omega}
\frac{|\varphi({\bf x})|}{M^\beta({\bf x}) }=\inf\left\lbrace c\geq0 
\quad\text{s.t.}\quad|\varphi({\bf x})|\leq cM^\beta({\bf x}),
\,\forall{\bf x}\in\Omega \right\rbrace. 
\eeq

\begin{remark}\label{sfs4}
$X_0= \mathcal{C}^0\left( \overline{\Omega}\right)$, endowed with the usual norm $\|\varphi\|_{X_0}=\displaystyle\mathop{\sup}_{{\bf x}\in\Omega}|\varphi|$.   
\end{remark}

\begin{remark}\label{sfs5}
For $\beta_1<\beta_2$, the continuous inclusion
$X_{\beta_2}\displaystyle\mathop{\subset}_{\text{cont}} 
X_{\beta_1}$ holds true. 
\end{remark}
Let the cone $P_\beta\subset X_\beta$ be defined as:

\beq\label{sfs8}
P_\beta:=\left\lbrace \varphi\in X_\beta,\, \varphi({\bf x})\geq0,\, \forall {\bf x}\in\Omega  \right\rbrace.
\eeq 

It is clear that $P_\beta$ is a reproducible cone for the space
$X_\beta$, that is \ $X_\beta = P_\beta - P_\beta $.
\\
It can be easily seen the interior
$\stackrel{\circ}{P_\beta}$ of $P_\beta$ is given by:  
\begin{eqnarray}\label{sfs10}
\stackrel{\circ}{P_\beta} & = & \left\lbrace \varphi\in
  X_\beta\quad\text{s.t.}\quad\displaystyle\mathop{\inf}_{{\bf
      x}\in\Omega}\frac{\varphi({\bf x})}{M^\beta({\bf x})}>0
\right\rbrace
= \left\{ \varphi\in X_\beta,\, \exists c_1>0 \quad\text{s.t.}
\quad \varphi({\bf x})\geq c_1M^\beta({\bf x}),\,\forall{\bf
  x}\in\Omega   
\right\} \nonumber\\
& = & \big\{ \varphi\in \mathscr{C}(\Omega) ,\, \exists c_1,c_2; \;
0<c_1<c_2, \quad\text{s.t.}\quad c_1M^\beta({\bf
  x})\leq\varphi({\bf x})\leq c_2M^\beta({\bf x}), \;
\forall{\bf x}\in\Omega   \big\}.
\end{eqnarray}

We now state the cornerstone result of this paper:
\begin{theorem}\label{est9-1}[\textbf{Existence and uniqueness theorem}]
Let $ b > 0, \; \delta \geq 8$ and $ {\bf k} \in \left ( W^{1, \infty}
(\Omega) \right )^n.$ Then there exists an unique solution $\psi$ to
the system (\eqref{vp1}-\eqref{vp3}).
Moreover, this solution belongs to $\stackrel{\circ}{P_1}$ which
amounts to say that $\psi$ is continuous in $\Omega$, and
there exist $ c_1, \; c_2 $ with \ $ 0 < c_1 < c_2 $ \ such that
$$
c_1 M({\bf x}) \leq \psi({\bf x}) \leq  c_2 M({\bf x}), 
\quad \forall \; {\bf x} \in \Omega.
  $$
\end{theorem}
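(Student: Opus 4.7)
The plan is to recast the problem (\ref{vp1})--(\ref{vp3}) as a principal eigenvalue problem for $L$ and to attack it via the Krein--Rutman theorems in the cone $P_1 \subset X_1$. Since $0$ is itself the eigenvalue of interest, $L$ is not directly amenable to Krein--Rutman, so I fix $\lambda > 0$ large enough that $L+\lambda I$ is invertible and work with the resolvent $T_\lambda := (L+\lambda I)^{-1}$. The programme is: (a) make $T_\lambda$ a compact, positive, strongly positive operator on $X_1$; (b) apply weak Krein--Rutman to produce a non-negative eigenvector $\psi_0$ for the spectral radius $r(T_\lambda)$; (c) use a duality argument to show $r(T_\lambda) = 1/\lambda$, which is equivalent to $L\psi_0 = 0$; (d) derive simplicity and the two-sided bounds $c_1 M \leq \psi_0 \leq c_2 M$ from strong Krein--Rutman; (e) rescale by the $L^1$-normalisation (\ref{vp3}) to obtain $\psi$.

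\textbf{Invertibility and compactness.} The bilinear form associated with $L+\lambda I$ is
\[
a_\lambda(u,\varphi) := \int_\Omega M\,\nabla\!\left(\frac{u}{M}\right) \cdot \nabla\!\left(\frac{\varphi}{M}\right) \ud{\bf x} - \int_\Omega {\bf k}\,u \cdot \nabla\!\left(\frac{\varphi}{M}\right) \ud{\bf x} + \lambda\int_\Omega \frac{u\varphi}{M}\,\ud{\bf x}.
\]
It is continuous on $H^1_M$ and, after a Young-inequality absorption of the ${\bf k}$-drift into the symmetric part plus the $\lambda$-mass term, it is coercive for $\lambda$ sufficiently large (depending on $\|{\bf k}\|_{L^\infty}$). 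Lax--Milgram then yields an isomorphism $L+\lambda I : H^1_M \to (H^1_M)'$. Via the continuous embeddings $X_1 \hookrightarrow L^2_M \hookrightarrow (H^1_M)'$, the inverse restricts to a map $T_\lambda : X_1 \to H^1_M$, which a weighted regularity result up to $\partial\Omega$ should upgrade to a compact map $T_\lambda : X_1 \to X_1$. This is where I expect the hypothesis $\delta \geq 8$ to be essential, by providing enough decay of $M$ at the boundary to close the relevant weighted Sobolev / Rellich-type embedding.

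\textbf{Positivity and strong Krein--Rutman.} Plain positivity $T_\lambda(P_1) \subset P_1$ is a standard maximum-principle argument: for $f \in P_1$ one tests the weak equation for $u = T_\lambda f$ against $u^-/M \in H^1_M$ and uses the $\lambda$-mass term to force $u^- \equiv 0$. Strong positivity $T_\lambda(P_1 \setminus \{0\}) \subset \stackrel{\circ}{P_1}$ reduces to the sharp two-sided bound $c_1 M \leq u \leq c_2 M$; I would produce these by constructing sub- and super-solutions of the form $\alpha M$ and $\beta M$ and invoking a comparison principle for the degenerate operator in $H^1_M$. With $T_\lambda$ compact, positive and strongly positive, strong Krein--Rutman guarantees that $r(T_\lambda) > 0$ is a simple eigenvalue of $T_\lambda$ with an eigenvector $\psi_0 \in \stackrel{\circ}{P_1}$, and every other $\mu \in \sigma(T_\lambda)$ satisfies $|\mu| < r(T_\lambda)$.

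\textbf{Identifying $r(T_\lambda) = 1/\lambda$ and conclusion.} To pin down the value of $r(T_\lambda)$ I pass to the adjoint: inspecting the defining bilinear form with $\varphi = M$ (so $\varphi/M \equiv 1$) shows that both the diffusion and the drift contributions vanish, hence $L^* M = 0$ and $T_\lambda^* M = M/\lambda$. Since $M \in \stackrel{\circ}{P_1}$ is a positive eigenvector of $T_\lambda^*$ and the analogous strong-positivity analysis for $T_\lambda^*$ yields uniqueness of positive eigenvectors for its principal eigenvalue, we deduce $r(T_\lambda^*) = 1/\lambda$; then $r(T_\lambda) = r(T_\lambda^*)$ forces $r(T_\lambda) = 1/\lambda$, i.e., $L\psi_0 = 0$. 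Setting $\psi := (b/\!\int \psi_0)\,\psi_0$ yields existence, while uniqueness and the bounds $c_1 M \leq \psi \leq c_2 M$ are immediate from the simplicity and interior-of-cone conclusions of strong Krein--Rutman. The delicate steps, and hence the main obstacles, are clearly (i) the compact/continuous embedding needed to make $T_\lambda$ act compactly on $X_1$, and (ii) the sharp pointwise estimates $c_1 M \leq T_\lambda f \leq c_2 M$: both require careful weighted analysis near $\partial\Omega$ and are exactly what the hypothesis $\delta \geq 8$ is there to support.
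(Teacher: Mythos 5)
Your overall skeleton --- invert $L+\lambda I$, work in the cone $P_1\subset X_1$, apply Krein--Rutman, pin down the eigenvalue via testing against $M$, and conclude simplicity and the two-sided bounds --- is the same architecture the paper uses, and the observation that $L^*M=0$ (so that pairing with $M$ identifies the eigenvalue) is indeed the paper's device. However, there are two genuine gaps in the way you propose to realise this.

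\textbf{Compactness on $X_1$ is not a single-step matter.} You assert that a weighted regularity result upgrades $T_\lambda=(L+\lambda I)^{-1}$ to a compact map $X_1\to X_1$. This fails. The elliptic $H^2$-regularity argument (the paper's Lemma~\ref{p1}) applies to $v=u/M^{\beta_2}$ and closes only when $\beta_2\le \tfrac12-\tfrac1\delta$; it gives compactness of $B_\alpha:X_1\to X_{1/2-1/\delta}$, which is a \emph{strictly larger} target space with a coarser norm, so precompactness there says nothing about precompactness in $X_1$. To return all the way to $X_1$ one must upgrade the weight exponent using barrier / comparison arguments (Lemmas~\ref{p25} and~\ref{p41}), each of which gains only $2/\delta$ in the exponent; this forces roughly $\delta/4$ iterations. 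The paper therefore applies the strong Krein--Rutman theorem not to $B_\alpha$ but to the iterate $S_\alpha = B_\alpha^{j_0+3}$, which \emph{is} compact on $X_1$ because it factors through the single compact step $B_\alpha:X_1\to X_{1/2-1/\delta}$. Your proposal, by treating $T_\lambda:X_1\to X_1$ as directly compact, skips the whole Section~\ref{prel} machinery that the theorem really rests on.

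\textbf{The barriers are not $\alpha M$ and $\beta M$.} You say you would produce $c_1 M\le u\le c_2 M$ by taking sub-/super-solutions proportional to $M$. That cannot work: expanding $L_\alpha(M)$ one finds that the leading coefficient vanishes and the next-order coefficient is $-2\delta\,\mathbf{x}\cdot\mathbf{k}$, which has no fixed sign, so $cM$ is in general neither a sub- nor a super-solution near $\partial\Omega$. The paper instead uses the family $M^\beta$ with $\beta<1$ (Lemma~\ref{p25}), the product $M\,e^{\lambda\|\mathbf{x}\|^2}$ with a carefully chosen $\lambda=\lambda_0$ depending on $\|\mathbf{k}\|_{L^\infty}$ for the upper bound (Lemma~\ref{p41}), and the more delicate function $W_1=(e^{-\lambda\|\mathbf{x}\|^2}-e^{-\lambda})^\delta$ for the lower bound (Lemma~\ref{m10}); the exponential prefactors are precisely what neutralises the sign-indefinite drift term.

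A further, smaller difference: the paper splits the Krein--Rutman argument across two spaces --- strong Krein--Rutman for $S_\alpha$ on $X_1$ (to get a \emph{positive} eigenvalue and hence $\text{Spr}(B_\alpha)>0$), then weak Krein--Rutman for $B_\alpha$ on $L^2_M$ (where compactness is cheap from $H^1_M\hookrightarrow L^2_M$) to produce the eigenvector, and only then uses the test function $\varphi=M$ to conclude the eigenvalue of $L$ is $0$. Your collapse of these two steps into one argument on $X_1$ would be fine \emph{if} you had compactness of $T_\lambda$ there, but you do not --- so the split is not merely stylistic, it is what makes the argument go through.
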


\begin{remark}
We assume throughout this paper that $\delta$ and ${\bf k}$
comply with the hypotheses of Theorem \ref{est9-1}.  Given the physical model under consideration (for which we gave a suitable description in the Introduction section), such an assumption does not lower down the level of generality.

\end{remark}

\section{Several preliminary results}
\label{prel}

\subsection{Basic facts}

In the following we denote  for any  real 
$\alpha$ the operator $L_\alpha : H_M^1(\Omega)\rightarrow \left( H_M^1(\Omega) \right)'$
given by $L_\alpha=L+\alpha I_d$, $I_d$ being the identity operator.

It is assumed, throughout this paper, that 
$\alpha$ is large enough so that:
\beq
\label{hyp-alp}
\alpha \geq \max \left \{ \frac 1 2 \|{\bf k}\|_{L^\infty(\Omega)} + 1, \quad
4 \lambda_0^2 + \lambda_0 n + 2 \lambda_0 
\|{\bf k}\|_{L^\infty(\Omega)}
+ \|{\nabla \cdot \bf k}\|_{L^\infty(\Omega)} \right \}
\eeq
where  
\beq
\label{lamb0}
\lambda_0 = 2 (\|{\bf k}\|_{L^\infty(\Omega)} + 1)
\eeq
\begin{proposition}\label{ff8}
The operator $L_\alpha$ is invertible.  
\end{proposition}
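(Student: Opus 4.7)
The plan is to realize $L_\alpha$ as the operator associated with a continuous, coercive bilinear form on $H_M^1$ and then to invoke the Lax-Milgram theorem. Concretely, I would introduce
$$a_\alpha(u,\varphi) := \int_\Omega M\,\nabla\!\left(\frac{u}{M}\right) \cdot \nabla\!\left(\frac{\varphi}{M}\right)\ud{\bf x} - \int_\Omega {\bf k}\,u \cdot \nabla\!\left(\frac{\varphi}{M}\right)\ud{\bf x} + \alpha \int_\Omega \frac{u\,\varphi}{M}\ud{\bf x},$$
where the last term expresses the identity part through the canonical embedding $L_M^2 \subset (H_M^1)'$ (using the $L_M^2$ inner product as pivot). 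Continuity of $a_\alpha$ on $H_M^1 \times H_M^1$ is exactly the calculation displayed just after (\ref{ff5}), augmented by the trivial bound $\alpha \|u\|_{L_M^2}\|\varphi\|_{L_M^2} \leq \alpha\|u\|_{H_M^1}\|\varphi\|_{H_M^1}$ for the zero-order piece, so this step is essentially for free.

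The main step is coercivity. Testing against $\varphi = u$ yields
$$a_\alpha(u,u) = |u|_1^2 + \alpha\,\|u\|_{L_M^2}^2 - \int_\Omega {\bf k}\,u \cdot \nabla\!\left(\frac{u}{M}\right)\ud{\bf x}.$$
I would control the cross term by writing $u = M^{1/2}\cdot(u/M^{1/2})$ inside the integral, applying Cauchy-Schwarz, and then Young's inequality:
$$\left| \int_\Omega {\bf k}\,u \cdot \nabla\!\left(\frac{u}{M}\right)\ud{\bf x} \right| \leq \|{\bf k}\|_{L^\infty}\,\|u\|_{L_M^2}\,|u|_1 \leq \tfrac{1}{2}|u|_1^2 + \tfrac{1}{2}\|{\bf k}\|_{L^\infty}^2\,\|u\|_{L_M^2}^2,$$
whence
$$a_\alpha(u,u) \geq \tfrac{1}{2}|u|_1^2 + \left(\alpha - \tfrac{1}{2}\|{\bf k}\|_{L^\infty}^2\right)\|u\|_{L_M^2}^2.$$
Under hypothesis (\ref{hyp-alp}), where the dominant summand $4\lambda_0^2 = 16(\|{\bf k}\|_{L^\infty}+1)^2$ easily exceeds $\tfrac{1}{2}\|{\bf k}\|_{L^\infty}^2$, this gives $a_\alpha(u,u) \geq c\,\|u\|_{H_M^1}^2$ for some $c>0$ independent of $u$.

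With continuity and coercivity in hand, the Lax-Milgram theorem delivers, for every $F \in (H_M^1)'$, a unique $u \in H_M^1$ with $a_\alpha(u,\cdot) = F$, that is $L_\alpha u = F$. This produces a continuous inverse $L_\alpha^{-1}: (H_M^1)' \to H_M^1$, proving that $L_\alpha$ is an isomorphism. The only delicate point in the argument is the coercivity estimate: the drift term $-\int{\bf k}\,u\cdot\nabla(u/M)$ is not sign-definite, so one needs to absorb it via Young's inequality into the diffusion seminorm and the zero-order term. That absorption is precisely why the authors impose a lower bound on $\alpha$; once the threshold (\ref{hyp-alp}) is met, the rest is a standard Lax-Milgram application.
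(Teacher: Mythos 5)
Your proof is correct and follows essentially the same route as the paper: it introduces the same bilinear form $a_\alpha$, proves coercivity via the identical Cauchy--Schwarz/Young estimate on the drift term (the paper's inequality (\ref{ff11})), observes that hypothesis (\ref{hyp-alp}) on $\alpha$ guarantees the needed lower bound, and concludes by Lax--Milgram.
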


\begin{proof}
Let $f\in \left( H_M^1(\Omega) \right)'$, arbitrary.  We have to prove the existence of a unique solution $u\in H^1_M$ to
\beq\label{ff9}
a_\alpha(u,\varphi)=\langle f,\varphi \rangle, \, \forall \varphi \in H_M^1
\eeq
where, in the above,
\beq\label{ff10}
a_\alpha(u,\varphi)=\int_{\Omega} M \nabla\left( \frac{u}{M} \right) \cdot \nabla\left( \frac{\varphi}{M} \right)\ud {\bf x}
-\int_{\Omega}{\bf k} u \cdot \nabla\left( \frac{\varphi}{M} \right) \ud {\bf x} +\alpha\int_\Omega \frac{u\varphi}{M}\ud {\bf x}
\eeq
Next, to use the Lax-Milgram theorem, one only needs to prove $a_\alpha$
is coercive as all other theorem constitutive assumptions are
obviously fulfilled. 
\\
The fact that $a_\alpha$ is coercive is an immediat consequence of the
inequality
\begin{eqnarray}\label{ff11}
& & \left| \int_{\Omega}{\bf k} \varphi \cdot \nabla\left( \frac{\varphi}{M} \right) \ud {\bf x} \right| \leq \int_{\Omega} \left| {\bf k} \right| \left| \frac{\varphi}{M^{1/2}} \right| \left| M^{1/2} \nabla\left( \frac{\varphi}{M} \right) \right| \ud {\bf x}\nonumber \\
& & \leq \frac{1}{2} \left( |\varphi|_1^2 + \|{\bf k}\|^2_{L^{\infty}}\|\varphi \|^2_{L^2_M} \right)
\end{eqnarray}
and of the choice of $\alpha$.
\end{proof}
 Let then $B_\alpha:\left( H_M^1\right)'\rightarrow H_M^1$ denote the inverse operator of $L_\alpha$.
\\
Clearly 
$$
B_\alpha \in \mathcal{L} \left ( (H_M^1)', \; H_M^1 \right ).
  $$
and, also,
$$
B_\alpha \in \mathcal{L} \left ( L_M^2, \; L_M^2 \right ).
  $$
\begin{lemma}\label{est3}[\textbf{Weak Maximum Principle}]
Let $f\in \left( H_M^1 \right)',\,f\geq 0$, and  $u=B_\alpha f$.  Then
$u\geq 0$.
\end{lemma}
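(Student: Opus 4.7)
The plan is to run the classical weak maximum principle argument: test the identity $a_\alpha(u,\varphi) = \langle f,\varphi\rangle$ against the negative part $\varphi = u^- := \max(-u,0)$, which is $\geq 0$ and is therefore a legitimate test function against the positive functional $f$, and show that $u^-$ must vanish. The first task is to verify that $u^- \in H_M^1$. Writing $v := u/M \in \hat H_M^1$ one has $u^-/M = v^-$, and the standard truncation (Stampacchia) rule, transferred through the isometry $H_M^1 = M \cdot \hat H_M^1$ noted in Section \ref{main}, yields $u^- \in H_M^1$ together with the chain-rule identity
$$
\nabla\!\left(\frac{u^-}{M}\right) = -\chi_{\{u<0\}}\nabla\!\left(\frac{u}{M}\right) \quad \text{a.e.\ in } \Omega.
$$

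Once this is in hand, I substitute $\varphi = u^-$ into the variational equation. Every integrand in $a_\alpha(u,u^-)$ is supported on the set $\{u<0\}$, where $u = -u^-$ and $\nabla(u/M) = -\nabla(u^-/M)$. Routine sign bookkeeping then collapses $a_\alpha(u,u^-)$ to
$$
-|u^-|_1^2 + \int_\Omega {\bf k}\, u^- \cdot \nabla\!\left(\frac{u^-}{M}\right) \ud {\bf x} - \alpha\|u^-\|_{L_M^2}^2,
$$
while the right-hand side $\langle f, u^-\rangle$ is non-negative. Rearranging gives
$$
|u^-|_1^2 + \alpha\|u^-\|_{L_M^2}^2 \leq \int_\Omega {\bf k}\, u^- \cdot \nabla(u^-/M) \ud {\bf x}.
$$

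To conclude, I absorb the cross term with the same Cauchy-Schwarz/Young estimate as in (\ref{ff11}) applied to $\varphi = u^-$, obtaining
$$
\tfrac{1}{2}|u^-|_1^2 + \bigl(\alpha - \tfrac{1}{2}\|{\bf k}\|_{L^\infty}^2\bigr)\|u^-\|_{L_M^2}^2 \leq 0.
$$
By the choice (\ref{hyp-alp}) of $\alpha$ -- in particular the second branch of the $\max$, which dominates $\tfrac{1}{2}\|{\bf k}\|_{L^\infty}^2$ through the $4\lambda_0^2$ term -- the coefficient of $\|u^-\|_{L_M^2}^2$ is strictly positive, so both terms must vanish; hence $u^- = 0$ in $L_M^2$ and therefore $u \geq 0$.

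The only non-routine step I anticipate is the first one, namely the truncation/chain-rule identity in the weighted space $\hat H_M^1$; once that is settled, the remainder is the same energy absorption already used to establish coercivity of $a_\alpha$ in Proposition \ref{ff8}.
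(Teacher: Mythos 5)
Your proposal is correct and is precisely the argument the paper has in mind: the paper's proof consists of the single line ``choose $\varphi = u^-$ in the variational formulation,'' citing Evans for the unweighted case, and you have simply spelled out the details — the truncation argument showing $u^-\in H_M^1$ via the isometry $H_M^1 = M\cdot \hat H_M^1$, the sign bookkeeping, and the absorption of the cross term via \eqref{ff11} exactly as in the coercivity step of Proposition \ref{ff8}. The check on $\alpha$ is also sound, since $\alpha \geq 4\lambda_0^2 = 16(\|{\bf k}\|_{L^\infty}+1)^2 > \tfrac12\|{\bf k}\|_{L^\infty}^2$.
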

\begin{proof}
The proof is classical and consists to choose 
$\varphi = u^-$ in the corresponding variational formulation.
(see for exemple \cite{evans} for the non-degenerate case $M \equiv 1$).
\end{proof}

\begin{lemma}\label{est6}[\textbf{Comparison Principle}]
Let $\Omega'$ be an open set such that $\Omega' \subset \Omega$, and
let $\overline{\Omega'}$ denote its closure.  Let $u,v\in
H_M^1(\Omega)$ so  that $L_\alpha u$ and  $ L_\alpha v$
be functions well defined on $\Omega'$ .
Assume that $L_\alpha u  \geq L_\alpha v $ on $\Omega'$.
Then:

\begin{itemize}
 \item Case 1.  If $\overline{\Omega'}\subset\Omega$ and if $u\geq v$ on $\partial\Omega'$, then $u\geq v$ on $\Omega'$. 
 \item Case 2.  If $\overline{\Omega-\Omega'}\subset\Omega$ and if $u\geq v$ on $\partial(\Omega-\Omega')$, then $u\geq v$ on $\Omega'$.
\end{itemize}

\end{lemma}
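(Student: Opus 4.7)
Adopt the classical Stampacchia method. Set $w = u - v \in H_M^1(\Omega)$; the hypothesis $L_\alpha u \geq L_\alpha v$ on $\Omega'$ reads $L_\alpha w \geq 0$ on $\Omega'$, and the conclusion $u \geq v$ on $\Omega'$ amounts to $w \geq 0$ on $\Omega'$. The natural test function is the negative part $w^- := \max(-w, 0) \geq 0$. Writing $w = M \tilde w$ with $\tilde w \in \hat H_M^1$ via the isometry $H_M^1 = M \cdot \hat H_M^1$ recalled in Section \ref{main}, and using that truncation is bounded in the standard weighted Sobolev space $\hat H_M^1$, one has $\tilde w^- \in \hat H_M^1$ and hence $w^- = M \tilde w^- \in H_M^1(\Omega)$. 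The strategy is to multiply $L_\alpha w \geq 0$ by the non-negative quantity $w^-/M$, integrate over $\Omega'$, and show that the resulting inequality $a_\alpha(w, w^-) \geq 0$ is incompatible with the coercivity estimate underlying Proposition \ref{ff8} unless $w^- \equiv 0$.

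\textbf{Where the two cases differ.} One must check that integration by parts produces no spurious boundary contribution. In \textbf{Case 1}, $\overline{\Omega'}\subset\Omega$, so $M$ is bounded below on $\overline{\Omega'}$ and the coefficients are smooth there; the hypothesis $w \geq 0$ on $\partial\Omega'$ immediately gives $w^- = 0$ on $\partial\Omega'$, so the boundary terms in the standard IBP vanish and $w^-$ extended by $0$ lies in $H_M^1(\Omega)$ with support in $\overline{\Omega'}$. In \textbf{Case 2}, $\Omega'$ abuts the degenerate set $\partial\Omega$: $w^-$ still vanishes on the interior interface $\partial(\Omega - \Omega')$ since $w \geq 0$ there, while on $\partial\Omega$ it has zero trace because $w \in H_M^1(\Omega)$. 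Extending by $0$ outside $\Omega'$ again yields an admissible element of $H_M^1(\Omega)$.

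\textbf{Energy computation.} On $\{w < 0\}$ one has $w = -w^-$ and $\nabla(w^-/M) = -\nabla(w/M)$; substituting into the bilinear form and using $w^- = 0$ on $\{w \geq 0\}$ gives
\[
a_\alpha(w, w^-) = -|w^-|_1^2 + \int_{\Omega} {\bf k}\, w^- \cdot \nabla\!\left(\tfrac{w^-}{M}\right) d{\bf x} - \alpha \|w^-\|_{L_M^2}^2.
\]
Dominating the middle term exactly as in \eqref{ff11} by $\tfrac{1}{2}|w^-|_1^2 + \tfrac{1}{2}\|{\bf k}\|_{L^\infty}^2 \|w^-\|_{L_M^2}^2$ and combining with $a_\alpha(w, w^-) \geq 0$ yields
\[
\tfrac{1}{2}|w^-|_1^2 + \left(\alpha - \tfrac{1}{2}\|{\bf k}\|_{L^\infty}^2\right)\|w^-\|_{L_M^2}^2 \leq 0.
\]
Since the choice of $\alpha$ in \eqref{hyp-alp} implies $\alpha > \tfrac{1}{2}\|{\bf k}\|_{L^\infty}^2$ (the second branch of the maximum already dominates $\tfrac{1}{2}\|{\bf k}\|_{L^\infty}^2$ once $\lambda_0 \geq 2$), we force $w^- \equiv 0$, i.e.\ $u \geq v$ on $\Omega$, and in particular on $\Omega'$.

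\textbf{Main obstacle.} The only genuinely delicate step is justifying the integration by parts in Case 2, where part of $\partial\Omega'$ coincides with the degenerate boundary $\partial\Omega$: there $M$ vanishes but $M\nabla(w/M)\cdot\nu$ is not controlled pointwise, so the usual IBP cannot be invoked verbatim. I would circumvent this by a cutoff approximation: multiply $w^-$ by $\eta_\varepsilon \in C_c^\infty(\Omega)$ with $\eta_\varepsilon = 1$ on $\{M \geq \varepsilon\}$ and $\eta_\varepsilon = 0$ in a neighborhood of $\partial\Omega$, perform the estimate above for the admissible test $\eta_\varepsilon w^-$ (whose support is compactly contained in $\Omega$, so all boundary terms vanish), and pass to the limit $\varepsilon \to 0$ using $M\nabla(w/M) \in (L^2(\Omega))^n$ and the zero-trace of $w^-$ on $\partial\Omega$ via dominated convergence. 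Case 1 is comparatively easy because the operator is uniformly elliptic with smooth coefficients on $\overline{\Omega'}$.
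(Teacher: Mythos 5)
Your proof is correct and follows the same Stampacchia route as the paper: set $w = u - v$, use as test function the element of $H_M^1(\Omega)$ equal to $w^-$ on $\Omega'$ and to $0$ on $\Omega - \Omega'$, and conclude from the coercivity estimate guaranteed by the choice of $\alpha$ in \eqref{hyp-alp}. Two remarks. First, the cutoff approximation you propose in the ``main obstacle'' paragraph is unnecessary: the identity $\langle L_\alpha w, \varphi \rangle = a_\alpha(w, \varphi)$ holds for every $\varphi \in H_M^1(\Omega)$ by the very definition of $L_\alpha$, so no classical integration by parts along $\partial\Omega$ is ever invoked; the only point one must check---and you do check it---is that the extension of $w^-\big|_{\Omega'}$ by zero lies in $H_M^1(\Omega)$, which in Case~2 follows from the vanishing trace of $w^-$ on $\partial\Omega$ (Proposition~\ref{sfs27}) together with $w^- = 0$ on $\partial(\Omega - \Omega')$. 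Second, a small slip in the closing line: the energy estimate applies to the truncated test function, hence forces $w^- \equiv 0$ on $\Omega'$ only (not on all of $\Omega$, where $w$ may well be negative in $\Omega - \Omega'$); but $w^- = 0$ on $\Omega'$ is precisely the desired conclusion $u \geq v$ on $\Omega'$.
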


\begin{proof}
Let $w=u-v\in H_M^1(\Omega)$ and $f=L_\alpha(u-v)$.  Then, for $\forall\varphi\in H_M^1(\Omega) $, such that $\varphi|_{\Omega-\Omega'}=0 $, one has:

\beq\label{est7}
\int_{\Omega'} M \nabla\left( \frac{w}{M} \right) \cdot \nabla\left( \frac{\varphi}{M} \right)\ud {\bf x}'
-\int_{\Omega'}{\bf k} w \cdot \nabla\left( \frac{\varphi}{M} \right) 
\ud {\bf x}' +\alpha\int_{\Omega'} \frac{w\varphi}{M}\ud {\bf x}'=
\int_{\Omega'}\frac{f\varphi}{M}\ud {\bf x}'
\eeq

We now take in \ref{est7} \ $ \varphi : \Omega \rightarrow \mathbb{R} $
defined by
\begin{equation}
\label{est8}
\varphi =
\left \{
\begin{array}{ccc}
w^- & \quad \text{on} & \quad \Omega' \\
0 & \quad \text{on} &\quad \Omega-\Omega'
\end{array}
\right.
\end{equation}
and we easily obtain the result.  
\end{proof}

We now introduce for any $\beta > 0$
\beq\label{sfs18}
L_{2,\beta}:=\left\lbrace  \varphi\in L^1_{\text{loc}}(\Omega)\quad\text{s.t.}\quad \frac{\varphi}{M^\beta}\in L_{2}(\Omega)   \right\rbrace.
\eeq
Actually, $L_{2,\beta}(\Omega)$ is a Hilbert space endowed with the norm 
\beq\label{sfs19}
\|\varphi\|_{L_{2,\beta}(\Omega)}=\left\| \frac{\varphi}{M^\beta} \right\|_{L_{2}(\Omega)},\,\forall \varphi\in L_{2,\beta}(\Omega).
\eeq
We have, as a straightforward consequence of Theorem 6.2.5 of
\cite{ne}, the following continuous inclusion:
\beq\label{sfs17} 
H_M^1(\Omega)\displaystyle\mathop{\subset}_{\text{cont}}L_{2, {1}/{2} + {1}/{\delta}}(\Omega)
\eeq
Next:

\begin{proposition}\label{sfs20}
 \begin{itemize}
  \item[(a)] If $\beta>1/2-1/(2\delta)$, then $X_\beta \displaystyle\mathop{\subset}_{\text{cont}}L_{M}^2$.
\item[(b)] If $\beta>1/2-3/(2\delta)$, then $X_\beta \displaystyle\mathop{\subset}_{\text{cont}}\left( H_{M}^1\right)'$.
 \end{itemize}
\end{proposition}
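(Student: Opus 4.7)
The plan is to exploit the simple pointwise bound $|\varphi(x)| \leq \|\varphi\|_{X_\beta} M^\beta(x)$ and reduce both embeddings to a radial integrability check for powers of $M = (1 - \|x\|^2)^\delta$. The basic fact I will use repeatedly is that $\int_\Omega M^t\, d{\bf x} < \infty$ if and only if $\delta t > -1$, since near $\partial\Omega$ one has $1-\|x\|^2 \sim 2(1-\|x\|)$ and the integral reduces, in polar coordinates, to $\int_0^1 (1-r)^{\delta t}\, dr$.

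For part (a), I would simply write, for any $\varphi \in X_\beta$,
\[
\|\varphi\|_{L_M^2}^2 = \int_\Omega \frac{\varphi^2}{M}\, d{\bf x} \leq \|\varphi\|_{X_\beta}^2 \int_\Omega M^{2\beta - 1}\, d{\bf x}.
\]
By the criterion above, the last integral is finite iff $\delta(2\beta - 1) > -1$, i.e. $\beta > 1/2 - 1/(2\delta)$, which yields the stated continuous inclusion with constant $\bigl(\int_\Omega M^{2\beta-1}\bigr)^{1/2}$.

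For part (b), I will identify each $\varphi \in X_\beta$ with the linear functional on $H_M^1$ given by $u \mapsto \int_\Omega \varphi u / M\, d{\bf x}$, which is the pairing defining the canonical embedding $L_M^2 \subset (H_M^1)'$ mentioned earlier. The idea is to borrow a $M^{1/2 + 1/\delta}$ weight from the integrand in order to invoke the continuous inclusion \eqref{sfs17}, namely $H_M^1 \subset L_{2, 1/2 + 1/\delta}$. Using the $X_\beta$-bound and Cauchy--Schwarz,
\[
\left| \int_\Omega \frac{\varphi u}{M}\, d{\bf x} \right| \leq \|\varphi\|_{X_\beta} \int_\Omega M^{\beta - 1 + 1/2 + 1/\delta}\, \frac{|u|}{M^{1/2 + 1/\delta}}\, d{\bf x} \leq \|\varphi\|_{X_\beta} \left( \int_\Omega M^{2\beta - 1 + 2/\delta}\, d{\bf x} \right)^{1/2} \|u\|_{L_{2, 1/2 + 1/\delta}}.
\]
Applying the radial criterion to the remaining integral, I need $\delta(2\beta - 1 + 2/\delta) > -1$, which rearranges exactly to $\beta > 1/2 - 3/(2\delta)$. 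Combining with the continuity of \eqref{sfs17} yields $\|\varphi\|_{(H_M^1)'} \leq C \|\varphi\|_{X_\beta}$.

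No real obstacle is expected; the only subtlety is bookkeeping the right power of $M$ to ship in and out through Cauchy--Schwarz so as to land on $L_{2, 1/2 + 1/\delta}$ and pick up the gain of $2/\delta$ in the integrability exponent that distinguishes (b) from (a). I would flag at the outset that the extra margin $-3/(2\delta)$ in (b) versus $-1/(2\delta)$ in (a) is precisely the trace of this weight transfer.
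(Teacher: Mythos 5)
Your proof is correct and follows essentially the same route as the paper's: part (a) is the identical pointwise bound plus the radial integrability criterion, and part (b) is the same Cauchy--Schwarz step transferring a weight $M^{1/2+1/\delta}$ onto $u$ so as to invoke the embedding $H_M^1 \subset L_{2,1/2+1/\delta}$ from \eqref{sfs17}, yielding the same exponent $2\beta-1+2/\delta$ and hence the same threshold $\beta > 1/2 - 3/(2\delta)$.
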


\begin{proof}

\begin{itemize}
 \item[(a)]  Let $\varphi\in X_\beta$, arbitrarily.  Then, $|\varphi({\bf x})|\leq M^\beta({\bf x}) \|\varphi\|_{X_\beta}$, from which we get that:
\beq\label{sfs21}
\int_{\Omega} \frac{\varphi^2}{M}\ud {\bf x}\leq \|\varphi\|_{X_\beta}^2\int_{\Omega}\frac{M^{2\beta}}{M} \ud {\bf x}=\|\varphi\|_{X_\beta}^2 \int_{\Omega}\left(1-\|{\bf x}\| ^2 \right)^{(2\beta-1)\delta} \ud {\bf x}
\eeq
However, $\int_{\Omega}\left(1-\|{\bf x}\| ^2 \right)^{(2\beta-1)\delta} \ud {\bf x}<+\infty $  iff $\beta>1/2-1/(2\delta). $

\item[(b)] Let $\varphi\in X_\beta$ and $\psi \in H_M^1$
arbitrarily. We have
\beq\label{sfs23} 
\left| \int_{\Omega} \frac{\varphi\psi}{M} \ud {\bf x} \right|\leq \|\varphi\|_{X_\beta}\int_{\Omega}M^{\beta-1}|\psi| \ud {\bf x}
\eeq
However:
\begin{equation}\label{sfs24}
\int_{\Omega}M^{\beta-1}|\psi| \ud {\bf x} 
\leq\underbrace{\left\| M^{-{1}/{2}-{1}/{\delta}} \psi\right\|_{L^2(\Omega)}}_{=\|\psi\|_{L_{2,1/2+1/\delta}}} \left\| M^{\beta-{1}/{2}+{1}/{\delta}} \right\|_{L^2(\Omega)}
\end{equation}

Moreover, the $L^2$-norm of $ M^{\beta-{1}/{2}+{1}/{\delta}} $
is finite iff $ \beta > 1/2 - 3/(2 \delta) $.
\\
Using also the continuous inclusion \eqref{sfs17} we have the result stated.
  
\end{itemize}

\end{proof}

\begin{proposition}\label{sfs27}
Let $\beta$ be such that $0\leq \beta\leq 1/2- 1/\delta$.  Then, for any $u\in H^1_M$,  
$\displaystyle\frac{u}{M^\beta}\in H^1_0(\Omega)$; moreover, $\left\| \displaystyle\frac{u}{M^\beta} \right\|_{H^1}\leq c \left \|u \right\|_{H^1_M}$.   

\end{proposition}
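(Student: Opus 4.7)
The plan is to exploit the factorization $u/M^\beta = (u/M) \cdot M^{1-\beta}$, which lets the two pieces of the $H_M^1$-norm enter directly. Setting $v := u/M^\beta$, the product rule gives
$$
\nabla v = M^{1-\beta} \nabla(u/M) + (1-\beta)\, M^{-\beta} (u/M)\, \nabla M.
$$
For the $L^2$ bound on $v$, the hypothesis $2\beta \leq 1 - 2/\delta < 1$ together with $M \leq 1$ yields $M^{2\beta} \geq M$, hence $\int_\Omega v^2 \,\ud\mathbf{x} = \int_\Omega u^2/M^{2\beta}\,\ud\mathbf{x} \leq \|u\|_{L_M^2}^2$.

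For the gradient bound I would split $|\nabla v|^2$ via $|a+b|^2 \leq 2|a|^2 + 2|b|^2$. The first piece $\int_\Omega M^{2-2\beta}|\nabla(u/M)|^2 \,\ud\mathbf{x}$ is controlled by $|u|_1^2$ through the same monotonicity: $2-2\beta \geq 1$ and $M \leq 1$ give $M^{2-2\beta} \leq M$. The second piece is
$$
2(1-\beta)^2 \int_\Omega u^2\, M^{-2-2\beta} |\nabla M|^2 \,\ud\mathbf{x}.
$$
Using $\nabla M = -2\delta (1-\|\mathbf{x}\|^2)^{\delta-1}\mathbf{x}$ one has $|\nabla M|^2 \leq 4\delta^2 (1-\|\mathbf{x}\|^2)^{2\delta - 2}$, whence $M^{-2-2\beta}|\nabla M|^2 \leq 4\delta^2 (1-\|\mathbf{x}\|^2)^{-2\beta\delta - 2}$. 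The latter is bounded by a constant multiple of $M^{-1} = (1-\|\mathbf{x}\|^2)^{-\delta}$ exactly when $-2\beta\delta - 2 \geq -\delta$, i.e.\ precisely under the hypothesis $\beta \leq 1/2 - 1/\delta$. This produces the bound $\leq C\int_\Omega u^2/M\,\ud\mathbf{x} \leq C\|u\|_{H_M^1}^2$, and the proof of $v \in H^1(\Omega)$ with the claimed norm estimate is complete.

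To promote $v \in H^1(\Omega)$ to $v \in H_0^1(\Omega)$, I would invoke the Hardy-type characterization: if $v \in H^1(\Omega)$ and $v/d \in L^2(\Omega)$, where $d(\mathbf{x}) := \mathrm{dist}(\mathbf{x}, \partial\Omega)$, then $v \in H_0^1(\Omega)$. Since $d(\mathbf{x})$ and $1-\|\mathbf{x}\|^2$ are comparable near $\partial\Omega$, the quantity $\int_\Omega v^2/d^2 \,\ud\mathbf{x}$ is, up to a bounded-interior contribution, comparable to $\int_\Omega u^2(1-\|\mathbf{x}\|^2)^{-2\beta\delta - 2}\,\ud\mathbf{x}$, which is dominated by $\int_\Omega u^2/M\,\ud\mathbf{x} < \infty$ under the same hypothesis $\beta \leq 1/2 - 1/\delta$. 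As an alternative route, one can use boundary cutoffs $\eta_\varepsilon$ equal to $1$ on $\{d > 2\varepsilon\}$ and $0$ on $\{d < \varepsilon\}$ with $|\nabla \eta_\varepsilon|\leq C/\varepsilon$, and show $\eta_\varepsilon v \to v$ in $H^1(\Omega)$: the only nontrivial piece is $(1/\varepsilon^2)\int_{\varepsilon < d < 2\varepsilon} v^2\,\ud\mathbf{x}$, which tends to $0$ by the same exponent arithmetic combined with absolute continuity of $\int u^2/M\,\ud\mathbf{x}$.

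The main obstacle is the second gradient term: the threshold $\beta \leq 1/2 - 1/\delta$ is dictated exactly by balancing the singularities of $M^{-2-2\beta}$, $|\nabla M|^2$, and the reference weight $M^{-1}$ near $\partial\Omega$. Reassuringly, the same exponent count that makes $|\nabla v|^2$ integrable is what forces $v/d \in L^2$, so both conclusions of the proposition emerge from one and the same boundary computation.
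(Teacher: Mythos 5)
Your proof is correct, and it takes a genuinely different route from the paper's. The paper factors $u/M^\beta = (u/M^{1/2}) \cdot M^{1/2-\beta}$ and first shows $u/M^{1/2} \in H^1(\Omega)$: there the cross term $\nabla(M^{1/2})\cdot(u/M)$ is rewritten as $u/M^{1/2+1/\delta}$ times an $L^\infty$ factor, and the $L^2$-integrability of $u/M^{1/2+1/\delta}$ is precisely the weighted-Sobolev embedding \eqref{sfs17} (quoted from Ne\v{c}as). Membership in $H_0^1$ then comes for free, since $M^{1/2-\beta}$ is $\mathcal{C}^1(\overline\Omega)$ — this is exactly where $\delta(1/2-\beta)\ge 1$, i.e. $\beta\le 1/2-1/\delta$, enters — and vanishes on $\partial\Omega$. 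Your factorization $u/M^\beta=(u/M)\,M^{1-\beta}$ closes the gradient estimate using only $u\in L_M^2$: the exponent count $-2\beta\delta-2\ge-\delta$ is again precisely $\beta\le 1/2-1/\delta$, so you bypass \eqref{sfs17} entirely, which makes the integrability part more self-contained. The price is on the boundary side: instead of multiplying an $H^1$ function by a $\mathcal{C}^1$ factor that vanishes on $\partial\Omega$, you invoke the Hardy-type characterization $H_0^1(\Omega)=\{v\in H^1(\Omega):\,v/d\in L^2(\Omega)\}$ (a genuine theorem, valid since $\Omega$ is a ball) or the equivalent cutoff argument. Both routes are sound and hit the same threshold $\beta\le 1/2-1/\delta$, but for different reasons: the paper needs it to keep $\nabla(M^{1/2-\beta})$ bounded, you need it to make the second gradient term $L^2$ via $u\in L_M^2$.
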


\begin{proof}
 
Let $v=\displaystyle \frac{u}{M^\beta}=\frac{u}{M^{1/2}}
M^{1/2-\beta}$.  We actually need to prove that $\displaystyle
\frac{u}{M^{1/2}}\in H^1(\Omega)$.  From this, since $M^{1/2-\beta}\in
\mathscr{C}^1(\overline{\Omega})$ 
and $M^{1/2-\beta}|_{\partial\Omega}
=0$, it will follow that $v \in {H^1_0}(\Omega)$.  

To begin with, notice first that $\displaystyle \frac{u}{M^{1/2}}
\in L^2(\Omega)$, as $u\in L^2_M(\Omega)$.  Next, 

\beq\label{sfs28}
\nabla\left( \frac{u}{M^{1/2}} \right)=\nabla\left( M^{1/2}\frac{u}{M}
\right)=M^{1/2}\nabla\left( \frac{u}{M} \right)+\nabla\left( M^{1/2}
\right)\frac{u} M.
\eeq

Now, $\displaystyle M^{1/2}\nabla\left( \frac{u}{M} \right)\in
L^2(\Omega) $ as $u\in H^1_M $.  Let us next show that $\displaystyle
\nabla\left( M^{1/2} \right)\frac{u} M \in L^2(\Omega) $.  One has:

\beq\label{sfs29}
\nabla\left( M^{1/2} \right)\frac{u} M=\frac{\nabla M}{2M^{3/2}}u=\frac{u}{M^{1/2+1/\delta}}\underbrace{\frac{\nabla M}{2M^{1-1/\delta}}}_{\in L^\infty(\Omega)}.
\eeq

Next, by \eqref{sfs17} $\displaystyle \frac{u}{M^{1/2+1/\delta}}
\in L^2(\Omega)$, from which we infer that $\displaystyle 
\frac{u}{M^{1/2}}\in H^1(\Omega) $, and further on that 
$\displaystyle \frac{u}{M^{\beta}}\in H^1_0(\Omega) $.  
It is easily deduced that:

\beq\label{sfs30}
\left\| \frac{u}{M^\beta} \right\|_{H^1_0}\leq c \left \|u \right\|_{H^1_M}.
\eeq

\end{proof}

\begin{remark}
Taking $\beta=0$ in Proposition \ref{sfs27} we deduce that
$u\in H^1_0(\Omega)$ whenever $u\in H^1_M(\Omega) $, which triggers that the trace of $u$ on the boundary $\partial\Omega$ is equal to zero.  
\end{remark}

\begin{proposition}\label{sfs31}
Let $\varphi\in X_\beta$, $\beta>1/2+1/(2\delta)$, be such that 
$\nabla \varphi \in (X_\gamma)^n$ with 
\\
 $ \gamma>1/2-1/(2\delta)$.  
Then $\varphi\in H_M^1$.
\end{proposition}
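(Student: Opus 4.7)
The plan is to verify directly the two defining integrability conditions of $H_M^1$ from the pointwise bounds on $\varphi$ and $\nabla\varphi$. The first, namely $\int_\Omega \varphi^2/M \, \ud\mathbf{x} < \infty$, is immediate from Proposition \ref{sfs20}(a): since the hypothesis $\beta > 1/2 + 1/(2\delta)$ certainly implies $\beta > 1/2 - 1/(2\delta)$, we have $\varphi \in X_\beta \subset L_M^2$.

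The substantive task is then to show $\int_\Omega M \bigl|\nabla(\varphi/M)\bigr|^2 \, \ud\mathbf{x} < \infty$. Since $M > 0$ on $\Omega$ and $\varphi$ is classically $C^1$ on $\Omega$ (as $\nabla\varphi$ is assumed continuous on $\overline\Omega$), I would expand
\[
\nabla\!\left(\frac{\varphi}{M}\right) = \frac{\nabla\varphi}{M} - \frac{\varphi\,\nabla M}{M^2}
\]
pointwise on $\Omega$ and apply $(a+b)^2 \leq 2a^2 + 2b^2$ to reduce the problem to the separate integrability of
\[
I_1 := \int_\Omega \frac{|\nabla\varphi|^2}{M}\,\ud\mathbf{x}, \qquad I_2 := \int_\Omega \frac{\varphi^2\,|\nabla M|^2}{M^3}\,\ud\mathbf{x}.
\]

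For $I_1$, the hypothesis $|\nabla\varphi| \leq c\,M^\gamma$ gives an integrand bounded by $c^2 M^{2\gamma-1} = c^2(1-\|\mathbf{x}\|^2)^{(2\gamma-1)\delta}$, and by the usual radial computation this is integrable over $\Omega$ exactly when $(2\gamma-1)\delta > -1$, i.e.\ when $\gamma > 1/2 - 1/(2\delta)$ — precisely the second hypothesis. For $I_2$, the key computation is $\nabla M = -2\delta\,\mathbf{x}\,(1-\|\mathbf{x}\|^2)^{\delta-1}$, whence $|\nabla M| \leq 2\delta\,M^{1-1/\delta}$, so $|\nabla M|^2/M^3 \leq 4\delta^2 M^{-1-2/\delta}$. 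Combining with $|\varphi|^2 \leq c^2 M^{2\beta}$, the integrand in $I_2$ is bounded by a multiple of $M^{2\beta - 1 - 2/\delta} = (1-\|\mathbf{x}\|^2)^{(2\beta-1)\delta - 2}$, integrable iff $(2\beta-1)\delta - 2 > -1$, i.e.\ $\beta > 1/2 + 1/(2\delta)$ — precisely the first hypothesis. Both $I_1$ and $I_2$ are thus finite, concluding $\varphi \in H_M^1$.

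There is no serious obstacle; the proposition is essentially a bookkeeping of weight exponents. The one point worth flagging is that $|\nabla M|$ degenerates less strongly than $M$ near $\partial\Omega$, losing a factor $M^{-1/\delta}$, which is precisely what forces the stricter threshold $1/2 + 1/(2\delta)$ on $\beta$ as compared to the threshold $1/2 - 1/(2\delta)$ on $\gamma$; the two hypotheses are consequently sharp for the argument presented.
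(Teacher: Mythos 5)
Your proof is correct and follows essentially the same route as the paper's: Proposition \ref{sfs20}(a) for the $L^2_M$ part, the expansion of $\nabla(\varphi/M)$ together with $(a+b)^2 \leq 2a^2 + 2b^2$, and then the same two exponent counts for $I_1$ and $I_2$, with the same thresholds $\gamma > 1/2 - 1/(2\delta)$ and $\beta > 1/2 + 1/(2\delta)$. The argument matches the paper's step for step, differing only in notation.
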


\begin{proof}
From Proposition \ref{sfs20} a) we have
$\varphi\in L_M^2$.  Next,

\begin{eqnarray}\label{sfs32}
\int_\Omega M \left| \nabla \left ( \frac{\varphi}{M} \right )
 \right|^2\ud {\bf x} & = & \int_\Omega M\left| \frac{1}{M}\nabla\varphi-\frac{\nabla M}{M^2}\varphi \right|^2\ud {\bf x} \nonumber\\
& \leq & 2\int_\Omega  \frac{1}{M}\left|\nabla\varphi\right|^2 \ud {\bf x} +2 \int_\Omega  \frac{1}{M^3}\left|\nabla M\right|^2 \varphi^2\ud {\bf x} 
\end{eqnarray}

However,

\beq\label{sfs33}
\int_\Omega \frac 1 M | \nabla \varphi|^2 \ud {\bf x} \leq c
\int_\Omega \frac{1}{M}M^{2\gamma}\ud {\bf x} < + \infty
\eeq

provided that $\delta(2\gamma-1)>-1$, which amounts to $\gamma>1/2-1/(2\delta) $.  

Next,

\begin{eqnarray}\label{sfs34}
\int_\Omega  \frac{\left|\nabla M\right|^2 \varphi^2}{M^3} \ud {\bf x} 
\leq c_1  \int_\Omega  \frac{\left|\nabla M\right|^2 M^{2\beta}}{M^3} 
\ud {\bf x}  & \leq & c_2  \int_\Omega  \frac{\left(1-\left\|{\bf x}
\right\|^2\right)^{2(\delta-1)} }{\left(1-\left\|{\bf x}\right\|^2
\right)^{3\delta}}\left(1-\left\|{\bf
    x}\right\|^2\right)^{2\delta\beta} 
\ud {\bf x} \nonumber\\
& \leq & c_2 \int_\Omega\left(1-\left\|{\bf x}\right\|^2
\right)^{2\delta\beta+2\delta-2-3\delta} \ud {\bf x}
\end{eqnarray}

For the above equation \eqref{sfs34} to hold true it is necessary that $2\delta\beta-\delta-2>-1$, i.e. $\beta>1/2+1/(2\delta)$.  

\end{proof}
Since $M^\beta \in X_\beta$ for any $\beta \geq 0$ we have the
following direct consequence 
 of the above result:

\begin{proposition}\label{sfs35}
For any  $\beta>1/2+1/(2\delta)$, we have  $M^\beta\in H_M^1$.
\end{proposition}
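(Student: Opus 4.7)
The plan is to apply Proposition \ref{sfs31} directly with $\varphi = M^\beta$, so I only need to verify its two hypotheses: that $M^\beta \in X_\beta$, and that $\nabla M^\beta \in (X_\gamma)^n$ for some $\gamma > 1/2 - 1/(2\delta)$.

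First, membership $M^\beta \in X_\beta$ is immediate from the definition \eqref{sfs2}: since $M$ is continuous on $\overline{\Omega}$ and vanishes on $\partial\Omega$, the same is true of $M^\beta$ for $\beta > 0$, and the pointwise estimate $|M^\beta({\bf x})| \leq 1 \cdot M^\beta({\bf x})$ holds trivially with $c=1$. (The edge case $\beta = 0$ reduces to constants, which also lie in $X_0 = \mathcal{C}(\overline{\Omega})$.)

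Next I would compute the gradient explicitly. Since $M({\bf x}) = (1 - \|{\bf x}\|^2)^\delta$, differentiating gives
\[
\nabla M^\beta({\bf x}) = -2\delta\beta\, {\bf x}\, (1 - \|{\bf x}\|^2)^{\delta\beta - 1}.
\]
Using $\|{\bf x}\| \leq 1$ on $\Omega$, one obtains the componentwise bound
\[
|\partial_i M^\beta({\bf x})| \leq 2\delta\beta\, (1 - \|{\bf x}\|^2)^{\delta\beta - 1} = 2\delta\beta\, M^{\beta - 1/\delta}({\bf x}),
\]
so each partial derivative lies in $X_\gamma$ with $\gamma = \beta - 1/\delta$, and the bound $\|\partial_i M^\beta\|_{X_\gamma} \leq 2\delta\beta$ is clear.

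It remains to check the threshold. The hypothesis $\beta > 1/2 + 1/(2\delta)$ yields
\[
\gamma = \beta - \frac{1}{\delta} > \frac{1}{2} + \frac{1}{2\delta} - \frac{1}{\delta} = \frac{1}{2} - \frac{1}{2\delta},
\]
which is precisely what Proposition \ref{sfs31} requires. Invoking that proposition concludes $M^\beta \in H_M^1$. I do not foresee any real obstacle here: this statement is essentially a bookkeeping corollary, and the only content is the elementary computation of $\nabla M^\beta$ and tracking the exponent shift $\beta \mapsto \beta - 1/\delta$ introduced by the factor $(1-\|{\bf x}\|^2)^{-1}$ in $\nabla M / M$.
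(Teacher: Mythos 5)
Your proof is correct and follows essentially the same route as the paper, which presents this proposition as a direct corollary of Proposition~\ref{sfs31} applied to $\varphi = M^\beta$, noting only that $M^\beta \in X_\beta$. You go one step further and explicitly verify the second hypothesis of that proposition — computing $\nabla M^\beta = -2\delta\beta\,{\bf x}\,(1-\|{\bf x}\|^2)^{\delta\beta-1}$, identifying $\gamma = \beta - 1/\delta$, and checking the threshold $\gamma > 1/2 - 1/(2\delta)$ — which the paper leaves implicit.
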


\subsection{Continuity and compactness properties of  $B_\alpha$}

The goal now is to appropriately introduce several $(Y', Y'')$ paires of
Banach spaces such that $B_\alpha$ is well defined and
continuous from $Y'$ to $Y''$.  Some compactness
properties of $B_\alpha$, needed further on, are also proved.

\begin{lemma}\label{p1}
(i) Let $\beta_2\in\mathbb{R}$ such that $1/2-3/(2\delta)<\beta_2
\leq 1/2-1/\delta$. Then  $B_\alpha\in
\mathcal{L}(L_M^2,X_{\beta_2})$.  
Moreover, $B_\alpha $ is a compact application from $L_M^2$ onto $X_{\beta_2}$.

(ii) Let $\beta_1,\beta_2\in \mathbb{R}$ such that
$1/2-3/(2\delta)<\beta_2\leq 1/2-1/\delta$, and $\beta_1\geq\beta_2$.
Then $B_\alpha\in \mathcal{L}(X_{\beta_1},X_{\beta_2})$.  
Moreover, $B_\alpha $ is a compact application from $X_{\beta_1}$ 
onto $X_{\beta_2}$.
 
\end{lemma}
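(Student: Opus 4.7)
My plan is to establish, for both parts, a pointwise boundary-decay estimate $|B_\alpha f({\bf x})| \le C\,M^{\beta_2}({\bf x})$ and combine it with interior H\"older regularity and Arzel\`a--Ascoli to conclude continuity and compactness. On any open $\Omega' \Subset \Omega$ the weight $M$ is bounded away from $0$, so $L_\alpha$ reduces to a uniformly elliptic operator in divergence form with smooth coefficients, and classical De~Giorgi / Moser theory furnishes a local H\"older estimate for $u = B_\alpha f$ with norm controlled by $\|f\|_{L_M^2}$ in case (i) (resp.\ by $\|f\|_{X_{\beta_1}}$, which dominates $\|f\|_{L^\infty_{\mathrm{loc}}}$, in case (ii)).

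The delicate part of the argument is the barrier. A direct computation with $M = (1-\|{\bf x}\|^2)^\delta$ yields
\[
L_\alpha M^{\beta_2}({\bf x}) = 4\delta(1-\beta_2)(\delta\beta_2 - 1)\,\|{\bf x}\|^2\,(1-\|{\bf x}\|^2)^{\delta\beta_2 - 2}
+ O\!\bigl((1-\|{\bf x}\|^2)^{\delta\beta_2 - 1}\bigr),
\]
where the remainder bundles the convection $\nabla\!\cdot({\bf k}\,M^{\beta_2})$ and the $\alpha M^{\beta_2}$ term. Under the hypothesis $\delta \ge 8$ one has $\beta_2 > 1/2 - 3/(2\delta) > 1/\delta$, so the leading prefactor is strictly positive; choosing $\eta > 0$ small enough then gives $L_\alpha M^{\beta_2} \ge c_\ast (1-\|{\bf x}\|^2)^{\delta\beta_2 - 2}$ on the collar $\mathcal{O}_\eta := \{{\bf x}\in\Omega : 1-\|{\bf x}\|<\eta\}$. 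Since $\beta_1 \ge \beta_2$ and $M \le 1$ give $|f| \le \|f\|_{X_{\beta_1}}\,M^{\beta_2}$ on $\Omega$, and since $M^{\beta_2} \le M^{\beta_2 - 2/\delta}$ on $\mathcal{O}_\eta$, one finds $L_\alpha(R M^{\beta_2}) \ge |f|$ there for $R = C\|f\|_{X_{\beta_1}}$. In case (i) the same inequality is reached either by first invoking the interior step to bound $|f|$ locally or, more intrinsically, via a Stampacchia-type truncation test against $|u|^{q-1}\mathrm{sign}(u)/M$ in the weak formulation. The comparison principle (Lemma~\ref{est6}, Case~2) applied to $u \mp R M^{\beta_2}$ on $\mathcal{O}_\eta$ then gives $|u| \le R M^{\beta_2}$ there, and the interior estimate extends this bound to all of $\Omega$, producing $B_\alpha f \in X_{\beta_2}$ with the claimed continuity.

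For compactness, I would take a bounded sequence $(f_n)$ in $L_M^2$, respectively $X_{\beta_1}$; the previous steps produce a uniform $X_{\beta_2}$-bound on $(u_n) := (B_\alpha f_n)$ together with equicontinuity on each $\Omega_\eta := \{{\bf x}\in\Omega : 1-\|{\bf x}\|>\eta\}$. Arzel\`a--Ascoli combined with a diagonal extraction delivers a subsequence converging uniformly on every $\Omega_\eta$, and the uniform pointwise majorant $|u_n/M^{\beta_2}| \le R$ kills the tails in the $X_{\beta_2}$-norm near $\partial\Omega$, so the limit exists in the $X_{\beta_2}$-topology.

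The main obstacle will be the second step above: the careful bookkeeping showing that the positive principal part of $L_\alpha M^{\beta_2}$ strictly dominates, uniformly on a full collar of $\partial\Omega$, both the convective and zeroth-order remainders and the pointwise envelope of the right-hand side. The role of the explicit threshold \eqref{hyp-alp} for $\alpha$ is precisely to enforce this uniform domination; without the comfortable slack provided by $\delta \ge 8$ the prefactor $(\delta\beta_2 - 1)$ would degenerate and the barrier argument would collapse.
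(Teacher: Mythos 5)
Your barrier strategy is genuinely different from the paper's, which transforms the problem by setting $v = u/M^{\beta_2}$, shows $v \in H^1_0(\Omega)$ and $-\Delta v = g$ with $\|g\|_{L^2(\Omega)} \lesssim \|f\|$, then invokes $H^2$ elliptic regularity and the Sobolev embedding $H^2(\Omega) \hookrightarrow \mathcal{C}(\overline\Omega)$ (valid for $n=2,3$). Unfortunately your route has two genuine gaps.

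First, your barrier argument cannot be set up at all in case (i). When $f \in L^2_M$, $f$ is only an $L^2$-type function and has no pointwise envelope; there is no constant $C$ with $|f({\bf x})| \leq C\,M^{\beta_2}({\bf x})$ a.e. The comparison principle (Lemma~\ref{est6}) requires the a.e.\ pointwise inequality $L_\alpha(R\,M^{\beta_2}) \geq f$ on the collar, and $L_\alpha(R\,M^{\beta_2})$ is a continuous function while $f$ may be unbounded on every neighbourhood of $\partial\Omega$. Your two suggested fixes do not close this: interior De Giorgi/Moser regularity bounds $u$, not $f$, so the right-hand side remains unbounded; and a Stampacchia truncation with test function $|u|^{q-1}\mathrm{sign}(u)/M$ would still at best yield $L^\infty$-type estimates for $u$ away from $\partial\Omega$, not the global weighted bound $|u| \leq R\,M^{\beta_2}$ on the collar, which is the whole point. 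The paper sidesteps this entirely by never asking for pointwise control on $f$: it only needs $f/M^{\beta_2} \in L^2(\Omega)$ (which follows from $\beta_2 < 1/2$), and the rest is $L^2$-based elliptic regularity.

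Second, the compactness step does not close even granting the $X_{\beta_2}$ bound. Arzel\`a--Ascoli with diagonal extraction yields a subsequence $u_{n'} \to u$ uniformly on each compact subset $\Omega_\eta$, but the uniform envelope $|u_n/M^{\beta_2}| \leq R$ does \emph{not} kill the tails of $\|u_{n'} - u\|_{X_{\beta_2}}$: on the annulus $\{1-\|{\bf x}\|<\eta\}$ you only get $|u_{n'} - u|/M^{\beta_2} \leq 2R$, uniformly in $\eta$. To make the tail vanish you would need a strictly better decay rate (a uniform bound in $X_{\beta_2'}$ for some $\beta_2' > \beta_2$), which your argument does not provide --- notably when $\beta_1 = \beta_2$ in part (ii). The paper instead proves $(v_n) = (u_n/M^{\beta_2})$ is bounded in $H^2(\Omega)$ and uses the \emph{compact} embedding $H^2(\Omega) \hookrightarrow \mathcal{C}(\overline\Omega)$, which gives convergence uniformly up to the boundary and hence in $X_{\beta_2}$. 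Your approach would need some replacement for that up-to-the-boundary equicontinuity.

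Your sign computation on the leading coefficient $4\delta(1-\beta_2)(\delta\beta_2-1)\|{\bf x}\|^2$ is correct and it is in fact the same computation the paper uses in Lemma~\ref{p25}, where the right-hand side \emph{is} in $X_{\beta-2/\delta}$ and hence pointwise controlled; there the barrier strategy does work, and is precisely what the paper does. In Lemma~\ref{p1}, however, the right-hand side is too rough for a barrier, which is why the paper switches to the $H^2$ argument.
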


\begin{proof}
The proofs for the above two statements are pretty much similar in
nature; henceforth, we offer below a global proof, and pause wherever necessary to
particularize it so to get the results in either (i) or (ii).  
Keeping that in mind, let $f\in L_M^2$ (for the (i) part) and 
$f\in X_{\beta_1}$  (for the (ii) part) and let $u=B_\alpha f$.  Observe that:

\beq\label{p2} L_M^2\mathop{\subset}_{\text{cont.}}(H_M^1)'
\eeq

and that

\beq\label{p3}
X_{\beta_1}\mathop{\subset}_{\text{cont.}}(H_M^1)'
\eeq

as consequences of Proposition \ref{sfs20}.
In both cases $f\in (H_M^1)'$
and $u \in H_M^1$ solves the equation
\beq\label{p4}
-\nabla\cdot\left[M\nabla\left(\frac{u}{M} \right)\right]+\nabla\cdot({\bf k}u) +\alpha u=f,\, u\in H_M^1
\eeq
We also have
$$
\| u \|_{H_M^1} \leq c \| f \|_{L_M^2} \quad \text{ for the part (i)}
  $$
and
$$
\| u \|_{H_M^1} \leq c \| f \|_{X_{\beta_1}} \quad \text{ for the part (ii)}.
  $$
Denote $v({\bf x})=u({\bf x})/M^{\beta_2}({\bf x}) $; we first take 
on to prove that $v$ is bounded on $\Omega$, which prompts
that $u$ belongs to $X_{\beta_2}$. 
 
Making use of the fact that $u=v M^{\beta_2}$ into \eqref{p4} leads to:

\begin{eqnarray}\label{p5}
& & -\nabla\cdot\left[M^{\beta_2}(\nabla v)+(\beta_2-1)M^{\beta_2-1}(\nabla M) v \right]+(\nabla\cdot {\bf k})M^{\beta_2}v \nonumber\\
& & +\beta_2 M^{\beta_2-1}(\nabla M)\cdot{\bf k}v+{\bf k}M^{\beta_2}\cdot(\nabla v)+\alpha M^{\beta_2}v=f,\, \forall{\bf x}\in\Omega
\end{eqnarray}

which, after a few re-arrangements, can be re-written as:

\beq\label{p6}
-\triangle v=g
\eeq

where
\begin{eqnarray}\label{p7}
g & = &  \frac{f}{M^{\beta_2}}+\left[(2\beta_2-1)\frac{\nabla M}{M}-{\bf k} \right]\cdot \nabla v \nonumber\\
& + & \left[(\beta_2-1)\frac{\triangle M}{M}+(\beta_2-1)^2 \frac{|\nabla M|^2}{M^2}-\nabla\cdot{\bf k}-\beta_2\frac{\nabla M}{M}\cdot{\bf k}-\alpha \right]v 
\end{eqnarray}
We also deduce from Proposition \ref{sfs27} that
\beq\label{v-in-H01}
v \in H_0^1(\Omega)
\eeq
In the following we shall obtain some convenient estimates
for the function $g$. We have
\begin{eqnarray}\label{p8}
\nabla v & = & \nabla\left(\frac{u}{M}M^{1-\beta_2} \right)= M^{1-\beta_2}\nabla\left(\frac{u}{M}\right)+\nabla(M^{1-\beta_2})\frac{u}{M} \nonumber\\
& = & M^{1/2-\beta_2}\underbrace{M^{1/2}\nabla\left(\frac{u}{M}\right)}_{\in L^2(\Omega)\,\text{as} \,u\in H_M^1}+(1-\beta_2)M^{-\beta_2-1}(\nabla M)u.
\end{eqnarray}
Using equation \eqref{p7} we get:
\beq\label{p9}
g=\frac{f}{M^{\beta_2}}+M^{1/2} g_1({\bf x}) \cdot
\nabla\left(\frac{u}{M}\right)+g_2({\bf x})u
\eeq

where, in the above,
\begin{eqnarray}
g_1({\bf x}) & = & \left[(2\beta_2-1)\frac{\nabla M}{M} -{\bf k}\right] M^{1/2-\beta_2} \nonumber\\
g_2({\bf x}) & = & (1-\beta_2)\left[(2\beta_2-1)\frac{\nabla M}{M} -{\bf
    k}\right] \cdot
\frac{\nabla M}{M^{\beta_2+1}} \nonumber\\
& + & \frac{1}{M^{\beta_2}}\left[(\beta_2-1)\frac{\triangle
    M}{M}+(\beta_2-1)\frac{|\nabla M|^2}{M^2}-\nabla\cdot{\bf
    k}-\beta_2\frac{\nabla M}{M}
\cdot {\bf k}-\alpha \right] \nonumber
\end{eqnarray}

For the (i) part of Lemma \ref{p1} one has:

\beq\label{p10}
\left\|\frac{f}{M^{\beta_2}} \right\|_{L^2}\leq\left\|M^{1/2-\beta_2}\right\|_{L^\infty}\left\|f\right\|_{L^2_M}
\eeq
while for the (ii) part of Lemma \ref{p1} one gets:

\beq\label{p12}
\left\|\frac{f}{M^{\beta_2}} \right\|_{L^2}\leq \left\|M^{\beta_1-\beta_2}\right\|_{L^\infty}\left\|f\right\|_{X_{\beta_1}}
\eeq
Moreover, 
$$
\frac{\nabla M}{M} \mathop{\sim}_{\|{\bf x}\|\to 1}
\frac{1}{1-\|{\bf x}\|^2}=\frac{1}{M^{1/\delta}}.
  $$
Therefore, the above leads to
$g_1\in L^{\infty}(\Omega)$. We then deduce $g_1 M^{1/2}\nabla(u/M)
\in L^2(\Omega)$ and 
\beq
\label{ineq-g1}
\left\|g_1 M^{1/2}\nabla(u/M) \right\|_{L^2(\Omega)}\leq
c_1\|u\|_{H_M^1}.
\eeq
Now, observe that: 

$$
g_2({\bf x})\mathop{\sim}_{\|{\bf x}\|\to 1}\frac{1}{M^{\beta_2+2/\delta}}
  $$

which implies

$$
g_2({\bf x})u\mathop{\sim}_{\|{\bf x}\|\to 1}\frac{u}
{M^{1/2+1/\delta}}\mathop{\underbrace{M^{1/2-\beta_2-1/\delta}}}_{\in 
L^\infty,\,\text{as}\, \beta_2\leq 1/2-1\delta}.
  $$

We deduce with the help of inclusion \eqref{sfs17} that
\beq\label{p19}
\|g_2 u\|_{L^2(\Omega)}\leq c_1\|u\|_{H_M^1}
\eeq
and further on, from \eqref{p9}, \eqref{p10}, \eqref{p12}, \eqref{ineq-g1} and
\eqref{p19}, that
\beq\label{p21}
\|g \|_{L^2(\Omega)}\leq c\|f\|_{L_M^2}
\eeq
for part (i), and
\beq\label{p22}
\|g \|_{L^2(\Omega)}\leq c\|f\|_{X_{\beta_1}}
\eeq
for (ii) part.
\\
Now since $v$ satisfies \eqref{p7} and \eqref{v-in-H01}
we obtain 
$v\in H^2(\Omega)$, and $\|v\|_{H^2(\Omega)}\leq
c\|g\|_{L^2(\Omega)}$.  
By the Sobolev's inclusion $H^2(\Omega)\displaystyle
\mathop{\subset}_{\text{cont}} \mathcal{C}(\overline{\Omega}),
\, n=2,3 $ it follows that $v\in \mathcal{C}(\overline{\Omega})$, and
that $\|v\|_{\mathcal{C}(\overline{\Omega})}\leq
c_1\|g\|_{L^2(\Omega)}$.  
\\
Thus $ u \in X_{\beta_2} $ and
\beq\label{p24}
\|u\|_{X_{\beta_2}}\leq c_1\|g\|_{L^2(\Omega)}
\eeq
Next, making use of \eqref{p21} and \eqref{p22}, we deduce, for part 
(i) that $B_\alpha\in \mathscr{L}(L_M^2,X_{\beta_2})$, and for 
part (ii) that $B_\alpha\in \mathscr{L}(X_{\beta_1},X_{\beta_2})$.

In order to show the compactness of $B_\alpha$, 
let $(f_q)_{q\in \mathbb{N}}$ be a bounded sequence in $L_M^2$ for part (i), 
and in $X_{\beta_1} $  for part (ii), respectively.  
Denote $u_q=B_\alpha(f_q)\in X_{\beta_2}$, and $v_q=u_q/M^{\beta_2}$. 
Next it is proved  that $v_q$ is bounded in $H^2(\Omega)$. 
  As the domain $\Omega$ is bounded, the inclusion $H^2(\Omega)
\subset \mathcal{C}(\overline{\Omega})$ is compact; hence there 
exists a subsequence $q'$ of $q$ and a
$v\in\mathcal{C}(\overline{\Omega})$ such that
$v_{q'}\displaystyle\mathop{\to}_{\mathcal{C}
(\overline{\Omega})}v$.   Denoting $u=vM^{\beta_2}$, we have that 
$u\in X_{\beta_2}$ and $\displaystyle\mathop{\sup}_{{\bf x}\in 
\Omega} \frac{|v_{q'}({\bf x})-v({\bf x})|}{M^{\beta_2}({\bf x})} 
\displaystyle\mathop{\to0}_{q'\to+\infty}$.  Therefore $\displaystyle
\mathop{u_{q'}\to u}_{q'\to+\infty}$ with respect to the 
$X_{\beta_2}$ space topology.  
 
\end{proof}

For any $r>0$, let us denote
$\Omega_r:=\{{\bf x} \,:\, \|{\bf x}\|<r\}  \equiv B(0,r)$.

\begin{lemma}\label{p25}
Let $\beta$ be such that $1/2+1/(2\delta)<\beta<1$.  Then 
 $B_\alpha\in\mathscr{L}(X_{\beta-2/\delta},X_\beta) $.  
\end{lemma}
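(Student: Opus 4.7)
The plan is to prove $|u({\bf x})| \leq c \|f\|_{X_{\beta - 2/\delta}} M^\beta({\bf x})$ on $\Omega$ for $u := B_\alpha f$, by constructing the explicit supersolution $\pm C M^\beta$ and invoking the Comparison Principle. As a preliminary step I would apply Lemma \ref{p1}(ii): since $\beta_1 := \beta - 2/\delta > 1/2 - 3/(2\delta)$ by the hypothesis on $\beta$, I can pick some $\beta_2 \in (1/2 - 3/(2\delta),\, 1/2 - 1/\delta]$ with $\beta_2 \leq \beta_1$ and conclude that $u \in X_{\beta_2} \subset \mathscr{C}(\overline{\Omega})$ with $\|u\|_{L^\infty(\Omega)} \leq c_0 \|f\|_{X_{\beta - 2/\delta}}$. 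This gives continuity up to the boundary and the uniform bound needed in the interior region.

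Next I would compute $L_\alpha(M^\beta)$ classically. Using $\nabla M = -2\delta M^{1 - 1/\delta} {\bf x}$ and expanding each term in the definition of $L_\alpha$, one finds after simplification
\[
L_\alpha(M^\beta)({\bf x}) = 4\delta^2 (1-\beta)\bigl(\beta - \tfrac{1}{\delta}\bigr) \|{\bf x}\|^2 \, M^{\beta - 2/\delta}({\bf x}) + R({\bf x}),
\]
where the remainder $R$ gathers the lower-order term $-2n\delta(1-\beta) M^{\beta - 1/\delta}$ coming from $\Delta M$, the convection contribution $(\nabla\cdot{\bf k})M^\beta - 2\delta\beta ({\bf k}\cdot{\bf x}) M^{\beta - 1/\delta}$, and the zero-order term $\alpha M^\beta$. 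Under the hypotheses $1/2 + 1/(2\delta) < \beta < 1$ (with $\delta \geq 8$, so $1/\delta < 1/2 + 1/(2\delta)$), both $(1-\beta) > 0$ and $(\beta - 1/\delta) > 0$; since $R({\bf x}) = o(M^{\beta - 2/\delta})$ as $\|{\bf x}\| \to 1$, there exist $r_0 \in (0,1)$ and $\gamma > 0$ such that
\[
L_\alpha(M^\beta)({\bf x}) \geq \gamma \, M^{\beta - 2/\delta}({\bf x}) \quad \forall\, {\bf x} \in \Omega \setminus \overline{\Omega_{r_0}}.
\]

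Finally I would choose $C$ proportional to $\|f\|_{X_{\beta - 2/\delta}}$, large enough that both $\gamma C \geq \|f\|_{X_{\beta - 2/\delta}}$ and $C(1 - r_0^2)^{\delta\beta} \geq \|u\|_{L^\infty(\Omega)}$ hold; the second inequality is permitted by the bound on $\|u\|_{L^\infty}$ from the preliminary step. Since $CM^\beta \in H_M^1$ (Proposition \ref{sfs35}) and $u \in H_M^1$, and since $L_\alpha(CM^\beta) \geq |f|$ on the annulus $\Omega' := \Omega \setminus \overline{\Omega_{r_0}}$ while $CM^\beta \geq \|u\|_{L^\infty} \geq \pm u$ on $\partial\Omega_{r_0}$, Lemma \ref{est6} (Case 2) applied to the pairs $(CM^\beta, u)$ and $(CM^\beta, -u)$ yields $|u| \leq CM^\beta$ on $\Omega'$. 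On the complementary set $\overline{\Omega_{r_0}}$, $M$ is bounded below by $(1-r_0^2)^\delta > 0$, so the estimate $|u| \leq (\|u\|_{L^\infty}/(1-r_0^2)^{\delta\beta}) M^\beta$ is immediate; concatenating the two bounds gives $u \in X_\beta$ with $\|u\|_{X_\beta} \leq c' \|f\|_{X_{\beta - 2/\delta}}$. The main technical obstacle is the explicit derivative book-keeping in computing $L_\alpha(M^\beta)$: one must verify that the higher-order term $-\nabla\cdot[M \nabla M^{\beta - 1}]$ produces the positive leading behavior $\sim M^{\beta - 2/\delta}$ with coefficient $4\delta^2(1-\beta)(\beta-1/\delta)$, which is exactly what the range $\beta \in (1/2 + 1/(2\delta), 1)$ is engineered to guarantee.
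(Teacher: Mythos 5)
Your proposal is correct and follows essentially the same route as the paper's proof: both compute $L_\alpha(M^\beta)$, identify the leading coefficient $4\delta(1-\beta)(\delta\beta-1)\|{\bf x}\|^2$ on $(1-\|{\bf x}\|^2)^{\delta\beta-2}=M^{\beta-2/\delta}$, absorb the lower-order terms by shrinking the annulus $\Omega\setminus\overline{\Omega_{r_0}}$, obtain the interior $L^\infty$ bound on $u$ from Lemma \ref{p1}(ii), and conclude via the Comparison Principle with the explicit supersolution $AM^\beta$. The only cosmetic difference is that you phrase the remainder estimate as $R=o(M^{\beta-2/\delta})$ and pick a single threshold $\gamma$, whereas the paper splits the remainder into $a_1,a_2$ and bounds each by $\tfrac14 a_0^0$; the content is the same.
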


\begin{proof}\label{p26}

Proposition \ref{sfs20} b)gives that $X_{\beta-2/\delta}\in (H_M^1)'$ 
(as $1/2-3/(2\delta)<\beta-2/\delta$), which entails that the 
operator $B_\alpha$ is well defined over $X_{\beta-2/\delta}$.
Let $f\in X_{\beta-2/\delta}$ and $u=B_\alpha(f)$.  
We have to prove the validity of the following assertion:
\\
There exists $A' > 0$ independent on $f$ such that
\beq\label{p27}
AM^\beta ({\bf x})-u({\bf x})\geq0, \quad \forall {\bf x}\in \Omega
\eeq
and
\beq\label{p28}
AM^\beta ({\bf x})+u({\bf x})\geq0, \quad \forall {\bf x}\in \Omega
\eeq

where we denoted $ A = A' \| f \|_{\beta - 2/\delta} $.
We shall provide a proof for the first one only, i.e. for \eqref{p27}, 
as the other may be proved similarly.  
The proof for \eqref{p27} relies on the
Comparison Principle stated in Lemma \ref{est6}.

One has:
\begin{eqnarray}\label{p29}
 L_\alpha(M^\beta) & = & M^\beta(\alpha+\nabla\cdot{\bf k})+M^{\beta-1}[\beta{\bf k}\cdot\nabla M+(1-\beta)\Delta M] \nonumber\\
& - & M^{\beta-2}(1-\beta)^2|\nabla M|^2
\end{eqnarray}

As:
\begin{eqnarray}\label{p30}
\nabla M & = & -2 \delta {\bf x}(1-\|{\bf x}\|^2)^{\delta-1} 
\nonumber\\
\Delta M & = & -2n\delta(1-\|{\bf x}\|^2)^{\delta-1}+4\delta
(\delta-1)\|{\bf x}\|^2 (1-\|{\bf x}\|^2)^{\delta-2} \nonumber
\end{eqnarray}

then,
\begin{eqnarray}
 L_\alpha(M^\beta) & = & a_0({\bf x})(1-\|{\bf
   x}\|^2)^{\delta\beta-2}+a_1({\bf x})(1-\|{\bf
   x}\|^2)^{\delta\beta-1}+a_2({\bf x})(1-\|{\bf
   x}\|^2)^{\delta\beta},\,\text{where} \nonumber \\
a_0({\bf x}) & = & 4\delta(1-\beta)(\delta\beta-1)\|{\bf x}\|^2 
\nonumber\\
a_1({\bf x}) & = & -[2\delta\beta{\bf x} \cdot {\bf
  k}+2(1-\beta)n\delta]
\nonumber\\
a_2({\bf x}) & = & \alpha+\nabla\cdot k({\bf x})
\nonumber 
\end{eqnarray}
It is clear that \ $a_0({\bf x}) \geq 0, \; \forall \; {\bf x} \in 
\Omega$.
Next, since $f\in X_{\beta-2/\delta}$, we deduce
\begin{equation}
\nonumber
-f({\bf x})\geq-\|f\|_{X_{\beta-2/\delta}}M^{\beta-2/\delta}({\bf x})=-\|f\|_{X_{\beta-2/\delta}}(1-\|{\bf x}\|^2)^{\delta\beta-2}
\end{equation}

Then:
\begin{eqnarray}
 A L_\alpha(M^\beta)-f & \geq & [A a_0({\bf x})-\|f\|_
{X_{\beta-2/\delta}}](1-\|{\bf x}\|^2)^{\delta\beta-2}+Aa_1({\bf x})
(1-\|{\bf x}\|^2)^{\delta\beta-1} \nonumber \\
 & + & Aa_2({\bf x})(1-\|{\bf x}\|^2)^{\delta\beta} \nonumber
\end{eqnarray}

In the following, we restrict ourselves to $\Omega-\Omega_{1/2}$, 
henceforth $\|{\bf x}\|\geq 1/2$.  Then 
$$
a_0({\bf x})\geq a_0^0:=\delta(1-\beta)(\delta\beta-1)>0,\quad \forall
{\bf x}\in\Omega-\Omega_{1/2}
  $$
and 
\beq
\label{p33}
\begin{split}
 A L_\alpha(M^\beta)-f \geq & A a_0^0(1-\|{\bf x}\|^2
)^{\delta\beta-2}  [ 1-\frac{\|f\|_{X_{\beta-2/\delta}}}{A a_0^0}
+\frac{a_1({\bf x})}{a_0^0}(1-\|{\bf x}\|^2) 
\\
+ & \frac{a_2({\bf x})}{a_0^0}(1-\|{\bf x}\|^2)^2  ],
\quad \forall
{\bf x}\in\Omega-\Omega_{1/2}
\end{split}
\eeq
Assume $f\neq0$ (this is not too restrictive as, whenever $f=0$,
the inequality \eqref{p27} is satisfied with $A=0$).
\\
Let us choose $ r_0 \in \left [ \frac 1 2, 1 \right [ $ close
enough to 1 such that
\beq
\label{def-r0}
\begin{cases}
\dfrac {1 - r_0^2} {a_0^0} \displaystyle\mathop{\sup}_{{\bf x} \in \Omega} |a_1({\bf x})|
\leq \dfrac 1 4
\\
\dfrac {(1 - r_0^2)^2} {a_0^0} \displaystyle\mathop{\sup}_{{\bf x} \in \Omega} |a_2({\bf x})|
\leq \dfrac 1 4
\end{cases}
\eeq
On the other hand, from Lemma \ref{p1} (ii) with $ \beta_1 =
\beta - 2/\delta $ and $ \beta_2 = \min \{\beta -  2/\delta , 1/2 -
1/\delta\} $ we deduce $ u \in X_{\beta_2} $ (since $f \in
X_{\beta_1}$) and
\beq\label{p37}
\|u\|_{\mathcal{C}(\overline{\Omega})}
\leq c_1\|u\|_{X_{\beta_2}}\leq c_2 \|f\|_{X_{\beta-2/\delta}}
\eeq

Take now
$$
A = \max \left\{ \frac 4 {a_0^0}, \, \frac {c_2} {(1 - r_0^2)^{\delta
    \beta}} \right\} \| f \|_{X_{\beta - 2/\delta}}.
  $$
Clearly, from (\ref{p33}), (\ref{def-r0})
 and (\ref{p37}),   
$$
L_\alpha (A M^\beta - u) = A L_\alpha (M^\beta) - f \geq 0
\quad \text{ on } \; \Omega - \Omega_{r_0}
  $$
and
$$
A M^\beta \geq  u \quad  \text{ on } \; {\overline \Omega_{r_0}}.
  $$
respectively.  Invoking the Comparison Principle (Lemma \ref{est6}) and the fact 
that $ u, M^\beta \in H_M^1 $, leads to
$$
A M^\beta \geq u \quad \text { on } \; \Omega - \Omega_{r_0}, \quad
\text { which implies } \quad
A M^\beta \geq u \quad \text { on } \; \Omega.
  $$
This ends the proof.
 
\end{proof}

\begin{lemma}\label{p41}
$B_\alpha\in\mathscr{L}(X_{1-1/\delta},X_1)$
\end{lemma}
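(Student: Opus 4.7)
The difficulty is that the direct barrier $A M^{\beta}$ used in Lemma \ref{p25} collapses at $\beta = 1$: the computation there shows
$$
L_\alpha(M^\beta) = a_0(\mathbf{x})(1-\|\mathbf{x}\|^2)^{\delta\beta - 2} + a_1(\mathbf{x})(1-\|\mathbf{x}\|^2)^{\delta\beta - 1} + a_2(\mathbf{x})(1-\|\mathbf{x}\|^2)^{\delta\beta},
$$
and at $\beta = 1$ the positive leading coefficient $a_0 = 4\delta(1-\beta)(\delta\beta - 1)\|\mathbf{x}\|^2$ vanishes identically, while $a_1(\mathbf{x}) = -2\delta\,\mathbf{x}\cdot\mathbf{k}$ is sign-indefinite. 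The plan is therefore to replace the barrier $M$ by a twisted barrier $w(\mathbf{x}) = M(\mathbf{x})\,e^{\lambda_0\|\mathbf{x}\|^2/2}$, where $\lambda_0 = 2(\|\mathbf{k}\|_{L^\infty} + 1)$ is the constant introduced in \eqref{lamb0}. The exponential factor is exactly what is needed to turn the $(1-\|\mathbf{x}\|^2)^{\delta - 1}$ coefficient into a strictly positive quantity near $\partial\Omega$.

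First I would compute $L_\alpha(w)$ directly from \eqref{ff5} (adding the $\alpha$ term). A straightforward calculation, using $\nabla M = -2\delta\mathbf{x}(1-\|\mathbf{x}\|^2)^{\delta - 1}$, yields an expression of the form
$$
L_\alpha(w) = 2\delta\,e^{\lambda_0\|\mathbf{x}\|^2/2}(1 - \|\mathbf{x}\|^2)^{\delta - 1}\bigl[\lambda_0\|\mathbf{x}\|^2 - \mathbf{k}\cdot\mathbf{x}\bigr] + M\,e^{\lambda_0\|\mathbf{x}\|^2/2}\,\Phi(\mathbf{x}),
$$
where $\Phi(\mathbf{x}) = \alpha + \nabla\cdot\mathbf{k} + \lambda_0\,\mathbf{k}\cdot\mathbf{x} - \lambda_0 n - \lambda_0^2\|\mathbf{x}\|^2$. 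For $\|\mathbf{x}\|$ close to $1$ the first bracket satisfies $\lambda_0\|\mathbf{x}\|^2 - \mathbf{k}\cdot\mathbf{x} \geq \|\mathbf{k}\|_{L^\infty} + 2 > 0$ by the choice of $\lambda_0$, and the hypothesis \eqref{hyp-alp} on $\alpha$ is precisely tailored to make $\Phi(\mathbf{x}) \geq 0$ on $\overline{\Omega}$. Consequently there is $r_0 \in (0, 1)$ and a constant $c_0 > 0$ such that
$$
L_\alpha(w)(\mathbf{x}) \;\geq\; c_0\,(1 - \|\mathbf{x}\|^2)^{\delta - 1} \;=\; c_0\, M^{1 - 1/\delta}(\mathbf{x}), \qquad \forall\,\mathbf{x}\in\Omega - \Omega_{r_0}.
$$

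Now, given $f \in X_{1 - 1/\delta}$, set $u = B_\alpha f$. Since $|f(\mathbf{x})| \leq \|f\|_{X_{1-1/\delta}} M^{1 - 1/\delta}(\mathbf{x})$, a choice $A_1 = c_0^{-1}\|f\|_{X_{1-1/\delta}}$ gives $L_\alpha(A_1 w) \geq f$ on $\Omega - \Omega_{r_0}$. On the inner ball $\overline{\Omega_{r_0}}$ I would invoke Lemma \ref{p1}(ii) (with e.g. $\beta_1 = 1 - 1/\delta$ and $\beta_2 = \min\{1 - 1/\delta,\,1/2 - 1/\delta\}$), which provides $u \in \mathscr{C}(\overline{\Omega})$ with $\|u\|_{\mathscr{C}(\overline{\Omega})} \leq c\,\|f\|_{X_{1-1/\delta}}$; since $M \geq (1 - r_0^2)^\delta$ on $\overline{\Omega_{r_0}}$, increasing $A_1$ to some $A = c'\,\|f\|_{X_{1-1/\delta}}$ ensures $A\,w \geq u$ on $\partial\Omega_{r_0}$. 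I then apply the Comparison Principle (Lemma \ref{est6}, Case 2) to $Aw - u \in H_M^1$ on $\Omega - \Omega_{r_0}$, obtaining $u \leq A w \leq A e^{\lambda_0/2} M$ on $\Omega - \Omega_{r_0}$. Combined with the interior estimate this yields $u(\mathbf{x}) \leq C\,\|f\|_{X_{1-1/\delta}} M(\mathbf{x})$ on $\Omega$. A symmetric argument applied to $-u$ (with barrier $-Aw$, using $L_\alpha(-Aw) \leq -f$) gives the matching lower bound, so $u \in X_1$ with $\|u\|_{X_1} \leq C\,\|f\|_{X_{1-1/\delta}}$.

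The single delicate step is the verification of the sign of $L_\alpha(w)$ near $\partial\Omega$: one must keep careful track of the two different boundary scales $(1-\|\mathbf{x}\|^2)^{\delta - 1}$ and $(1-\|\mathbf{x}\|^2)^\delta$ and check that the constant $\lambda_0$ in the exponent, the lower bound on $\alpha$, and the radius $r_0$ can be chosen consistently. Once these are fixed, the rest is a standard barrier/comparison argument combined with the already-established compactness estimate of Lemma \ref{p1}.
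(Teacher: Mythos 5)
Your proposal is correct and follows essentially the same route as the paper: the paper also introduces the exponentially twisted barrier $W_1 = M W$ with $W(\mathbf{x}) = e^{\lambda_0 \|\mathbf{x}\|^2}$, computes $L_\alpha(MW)$, uses the first condition $2\lambda_0\|\mathbf{x}\|^2 - \mathbf{k}\cdot\mathbf{x} \geq 1$ and the hypothesis \eqref{hyp-alp} to get $L_\alpha(MW) \geq 2\delta W M^{1 - 1/\delta}$ on $\Omega - \Omega_{1/2}$, bounds $u$ on the inner ball via Lemma \ref{p1}(ii), and concludes by the Comparison Principle. The only cosmetic difference is your $\lambda_0/2$ in place of $\lambda_0$ in the exponent, which shifts where the bracket becomes positive (you would need $r_0$ a bit larger than $1/2$, and the asserted lower bound $\|\mathbf{k}\|_{L^\infty}+2$ holds only in the limit $\|\mathbf{x}\|\to 1$), but this has no bearing on the validity of the argument.
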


\begin{proof}
 Let $f\in X_{1-1/\delta}$ and $u=B_\alpha(f)$.   Let $W:\Omega
\to\mathbb{R},\, W({\bf x})=e^{\lambda\|{\bf x}\|^2} $.  
The job is now to prove that there exists $\lambda>0$ and $A'>0$ 
independent on $f$, such that
$$
|u({\bf x})|\leq A W({\bf x})M({\bf x}),\quad \forall {\bf x}\in \Omega
  $$
where we denoted $ A = A' \| f \|_{X_{1-1/\delta}}.$
  Actually we take on to prove $AWM\geq u$ only, as 
$AWM\geq -u$ can be proved similarly.  To achieve this, we again
make use of the Comparison Principle. 

One has:
\beq
\nonumber
L_\alpha(MW)=M[-\Delta W+{\bf k}\cdot\nabla W+(\alpha+\nabla\cdot{\bf k})W]+\nabla M\cdot({\bf k}W-\nabla W)
\eeq

However, $\nabla W=2\lambda{\bf x}W$, $\Delta W=(2\lambda n+\lambda^2\|{\bf x}\|^2)W$, hence:

\begin{eqnarray}\label{p42}
 L_\alpha(MW) & = & MW(-4\lambda^2\|{\bf x}\|^2-\lambda n+2{\bf k}\cdot\lambda{\bf x}+\alpha+\nabla\cdot{\bf k}) \nonumber\\
& + & 2\delta M^{1-1/\delta}W(2\lambda\|{\bf x}\|^2-{\bf k}\cdot{\bf x})
\end{eqnarray}

Let us take $\lambda = \lambda_0$  with $\lambda_0$ given in
\eqref{lamb0}. We obtain 
$$
2\lambda\|{\bf x}\|^2-{\bf k}\cdot{\bf x} \geq 1.
  $$
From hypothesis
\eqref{hyp-alp} on $\alpha$ we obtain
\beq
\nonumber
-4\lambda^2 \|{\bf x}\|^2-\lambda n+2 \lambda {\bf k}\cdot
{\bf x}+\alpha+\nabla\cdot{\bf k}\geq 0 \quad \text{ on }
\Omega-\Omega_{1/2}
\eeq
which gives
\beq\label{p46}
L_\alpha(MW)\geq 2\delta WM^{1-1/\delta},\, \forall {\bf x}\in
\Omega-\Omega_{1/2}.
\eeq

Next, as $f\in X_{1-1/\delta}$, one gets
\beq
\nonumber
-f({\bf x})\geq -\|f\|_{X_{1-1/\delta}}M^{1-1/\delta}({\bf x}),\, \forall{\bf x}\in \Omega
\eeq

and invoking further on \eqref{p46} leads to:

\beq\label{p48}
AL_\alpha(MW)-f\geq [2\delta WA-\|f\|_{X_{1-1/\delta}}]M^{1-1/\delta},
\;\forall {\bf x}\in\Omega-\Omega_{1/2}, \; \forall A > 0.
\eeq

Choose $A>0$ such that $2\delta A W({\bf x})\geq \|f\|_{X_{1-1/\delta}},\,\forall {\bf x}\in\Omega-\Omega_{1/2}$.  For instance, any $A$ such that:

\beq\label{p49}
A\geq \frac{1}{2\delta}\|f\|_{X_{1-1/\delta}}
\eeq 

will fit in.  Then:

\beq\label{p50}
L_\alpha(AMW-u)\geq0 \quad \quad \text { on } \quad \Omega - \Omega_{1/2}
\eeq

On the other hand, one needs to choose $A$ so that $AMW\geq u$ holds
true over $\overline{\Omega_{1/2}}$.  We proceed as in the
proof of Lemma \ref{p25}. A sound choice for $A$ is one such that 

\beq\label{p51}
A\left( \mathop{\min}_{{\bf x}\in\Omega_{1/2}}W({\bf x})\right)  \left(\mathop{\min}_{{\bf x}\in\Omega_{1/2}}M({\bf x})\right)\geq \mathop{\max}_{{\bf x}\in\Omega}u({\bf x})
\eeq

Next, $\displaystyle \mathop{\min}_{{\bf x}\in\Omega_{1/2}}W({\bf
  x})=1$, 
$\displaystyle \mathop{\min}_{{\bf x}\in\Omega_{1/2}}M({\bf x})=
(3/4)^{\delta}$, and we are left over to inquire about $\displaystyle
\mathop{\max}_{{\bf x}\in\Omega}u({\bf x})$.  To get an answer to, we
shall 
call in Lemma \ref{p1} with 
$\beta_1= 1 - 1/\delta $ and $ \beta_2=1/2-1/\delta$.
One has
$$
\|u\|_{\mathcal{C}(\overline{\Omega})}\leq c_1\|u\|_{X_{1/2-1/\delta}}
\leq c_2 \|f\|_{X_{1-1/\delta}}.
  $$
Then, one may choose 
$A\geq (4/3)^\delta c_2 \|f\|_{X_{1-1/\delta}}$ to ensure 
\eqref{p51} holds true.  Finally, taking into account \eqref{p49}, 
we are left to choose 
\beq
\nonumber
A=\max \{1/(2\delta),(4/3)^\delta c_2\}\|f\|_{X_{1-1/\delta}}
\eeq
and we end the proof exactly as in Lemma \ref{p25}, taking into
account the fact that $M W \in H_M^1$, so that the Comparison Principle
can be made use of.

\end{proof}

\subsection{Strong Maximum Principle for the $B_\alpha$ operator}

This section aim is to prove the following ``Strong Maximum
Principle `` property for $B_\alpha$:
for any $f \in P_1 - \{0\}$, 
$B_\alpha f \in \stackrel{\circ}{P}_1$.  
\\

The following  weaker result is first proved. 
\begin{lemma}\label{m1}
 Let $f\in P_1,\, f\neq0$ and  $u=B_\alpha f$. 
Then $u({\bf x})>0,\,\forall {\bf x}\in \Omega$.
\end{lemma}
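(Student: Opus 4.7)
The plan is to combine the weak maximum principle (already established as Lemma \ref{est3}) with the classical interior strong maximum principle applied on compact subsets of $\Omega$, then use a connectedness argument.

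First, I would note that the weak maximum principle gives $u \geq 0$ on all of $\Omega$, and that $u$ cannot vanish identically: if $u \equiv 0$ were true, then by \eqref{ff5} we would have $L_\alpha u = 0$, forcing $f = 0$, a contradiction to $f \neq 0$. So the task is to upgrade non-negativity to strict positivity everywhere in $\Omega$.

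The key observation is that although the operator $L_\alpha$ is degenerate near $\partial \Omega$ (because $M$ vanishes there), on any open subset $\Omega' \Subset \Omega$ with $\overline{\Omega'} \subset \Omega$ the function $M$ is smooth and bounded below by a positive constant. Expanding the principal part of $L_\alpha$ via the identity $M \nabla(u/M) = \nabla u - u \nabla M / M$ one rewrites
\begin{equation*}
L_\alpha u = - \Delta u + \mathbf{b}(\mathbf{x}) \cdot \nabla u + c(\mathbf{x})\, u,
\qquad
\mathbf{b} = \frac{\nabla M}{M} + \mathbf{k},
\quad
c = \nabla \cdot\!\Bigl(\frac{\nabla M}{M}\Bigr) + \nabla \cdot \mathbf{k} + \alpha,
\end{equation*}
so that on $\Omega'$ the operator becomes uniformly elliptic with bounded coefficients (smooth in the interior part coming from $M$, and $W^{1,\infty}$ for the parts coming from $\mathbf{k}$). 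Since $f \in P_1 \subset X_1 \subset L_M^2$, the continuity result of Lemma \ref{p1}(i) gives $u \in X_{\beta_2}$, hence $u$ is continuous on $\Omega$; standard interior elliptic regularity then promotes $u$ to $W^{2,p}_{\mathrm{loc}}(\Omega)$ for every finite $p$, which is the regularity needed to apply the strong maximum principle.

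The decisive step is then the classical interior strong maximum principle (Gilbarg--Trudinger Thm.~3.5, no sign assumption on $c$ is needed for a non-negative supersolution): since $L_\alpha u = f \geq 0$ and $u \geq 0$ on $\Omega'$, if $u(\mathbf{x}_0) = 0$ at some interior point $\mathbf{x}_0$, then $u$ must vanish identically in the connected component of $\Omega'$ containing $\mathbf{x}_0$. Applying this on a growing family of subdomains $\Omega' \Subset \Omega$, the zero set $Z := \{\mathbf{x} \in \Omega : u(\mathbf{x}) = 0\}$ is open in $\Omega$, while it is closed by continuity of $u$. Connectedness of $\Omega$ forces $Z = \emptyset$ or $Z = \Omega$; the second possibility was ruled out above, so $u > 0$ throughout $\Omega$, as required.

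The main obstacle I anticipate is verifying that one is entitled to quote the non-degenerate strong maximum principle: all of the work hides in checking that on compact subsets of $\Omega$ the coefficients behave well enough (uniform ellipticity and bounded lower order terms), and that $u$ has enough regularity to apply the theorem --- both of which follow once one isolates $u$ away from $\partial \Omega$ where $M$ degenerates. The fact that no sign condition on $c$ is needed (because $u \geq 0$ and it is the minimum value $0$ that is attained) is what makes the argument go through cleanly, without any further assumption on $\alpha$.
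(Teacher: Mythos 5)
Your proof is correct, and it takes a genuinely different route from the paper's. The paper re-derives a Hopf-lemma argument tailored to the degenerate operator $L_\alpha$: it locates a point $\mathbf{y}_0$ on the boundary of the zero set, introduces the exponential barrier $w(\mathbf{x}) = e^{-\lambda\|\mathbf{x}-\mathbf{x}_0\|^2} - e^{-\lambda r_0^2}$, verifies $L_\alpha w \leq 0$ on the annulus $B(\mathbf{x}_0, r_0) \setminus B(\mathbf{x}_0, r_0/2)$ for $\lambda$ large, invokes the Comparison Principle (Lemma \ref{est6}) to get $u \geq A w$ there, and reaches a contradiction from the resulting sign of the normal derivative $\partial u/\partial\nu(\mathbf{y}_0)$, which must vanish at an interior minimum. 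Your proof instead rewrites $L_\alpha$ as a non-degenerate uniformly elliptic operator with bounded lower-order coefficients on each $\Omega' \Subset \Omega$, quotes the classical interior strong maximum principle (the sign condition on $c$ being dispensable via the standard $c^+$-replacement, since the interior minimum attained is $0 \leq 0$), and closes with an open--closed connectedness argument on the zero set. Both are legitimate. The paper's route is self-contained within the machinery it has already built (the Comparison Principle being its only pointwise tool), and so does not need to verify interior $W^{2,p}_{\mathrm{loc}}$ regularity of $u$; your route is shorter but imports the full non-degenerate theory, and the one place to be careful is precisely that regularity step: Gilbarg--Trudinger Thm.~3.5 is stated for $C^2$ functions, so you need either the $W^{2,n}_{\mathrm{loc}}$ strong maximum principle (Thm.~9.6 there) or one more bootstrap step to $C^2_{\mathrm{loc}}$ before quoting the classical version; your interior-regularity remark supplies this, so the argument goes through.
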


\begin{proof}
We adapt here the classical proof for the case where $M$ is
equal to 1 (the non-degenerate case; see for example Gilbarg and Trudinger 
\cite{gil-tru} or Evans \cite{evans}). 

We remark first that $u$ is continuous on $\Omega$.
Assume $\exists{\bf x}\in \Omega$ such that $u({\bf x})=0$.  

Denote $V_e:=\{{\bf x}\in\Omega,\,u({\bf x})=0 \}$, 
$V_s:=\{{\bf x}\in\Omega,\,u({\bf x})>0 \}$, 
 $V_e\cup V_s=\Omega$. 

By hypothesis $V_e\neq \emptyset$, as well as 
$V_s\neq \emptyset$. It is clear that  $V_s$ is open 
and that $\partial V_e\nsubseteq\partial\Omega$.  Let ${\bf z}_0\in 
\partial V_e\cap \Omega \neq \emptyset$; then $u({\bf z}_0)={\bf
  0}$.  Denote $d=\displaystyle\mathop{\inf}_{{\bf
    z}\in\partial\Omega}|
{\bf z}_0-{\bf z}|>0$, thus $|{\bf z}_0|=1-d$.
Let $r_1\in ]0,d/4[$ be small enough, and fix $ {\bf x}_0 \in 
V_s $ such that
$|{\bf x}_0-{\bf z}_0|<r_1$.  As $V_s$ is an open subset, 
there exists $r_2>0$ such that $B({\bf x}_0,r_2)\subset V_s$.  
Therefore choose $r_0=\sup \{r\quad\text{s.t.}\quad  B({\bf
  x}_0,r)\subset V_s\}$.  Then there exists ${\bf y}_0 \in
\overline{B({\bf x}_0,r_0)}\cap V_e\neq\emptyset$, $u({\bf y}_0)=0$,
and $|{\bf y}_0-{\bf x}_0|=r_0$.  This prompts $r_0\leq r_1$, hence one
may choose a small enough $r_0$.  Thus, $u({\bf y}_0)=0,\, 
u({\bf x})>0,\, \forall {\bf x}\in B({\bf x}_0,r_0)$.  Let the 
function $w$ be such that $w:B({\bf x}_0,r_0)\to\mathbb{R},\,
w({\bf x})=e^{-\lambda \|{\bf x}-{\bf x}_0\|^2}-e^{-\lambda r_0^2}$,
where $\lambda>0$ will be later chosen conveniently.  Denote also 
by $w$ the continuous extention of $w$ at $0$ on $\Omega$ 
 and note that $w\in H_M^1(\Omega)$, $w({\bf x})|_{{\bf x}\in\partial B({\bf x}_0,r_0)}=0$, and $w({\bf x})|_{{\bf x}\in B({\bf x}_0,r_0)}>0$.   

Next, we take on to prove that $\exists A>0$ such that $u({\bf x})
\geq Aw({\bf x}),\,\forall {\bf x}\in B({\bf x}_0,r_0)-
B({\bf x}_0,r_0/2)$.  To achieve this we shall make use of the 
Comparison Principle.  We actually evaluate $L_\alpha(u-Aw)=
f-AL_\alpha(w),\,f\geq0$, and  prove that $L_\alpha(w)\leq0$ for 
any $\forall {\bf x}\in B({\bf x}_0,r_0)-B({\bf x}_0,r_0/2)$.  
\\

Basic calculations lead to:

\beq\label{m2}
L_\alpha(w)= E_1 + E_2  
\eeq 
where, in the above
\begin{eqnarray}
\nonumber
 & & E_1=-\Delta w+\left(\frac{\nabla M}{M}+{\bf k} \right)\cdot \nabla w \\
& & E_2=\left[ \nabla\cdot\frac{\nabla M}{M}+
\nabla \cdot {\bf k}+\alpha  \right] w  
=\left[ \frac{\Delta M}{M} - \frac{|\nabla M|^2}{M^2}+\nabla\cdot {\bf
    k}+\alpha \right]w
\nonumber
\end{eqnarray}
Using the expression of $w$ we find
\begin{eqnarray}
\label{m3}
 E_1=\bigg[ & - & 4\lambda^2|{\bf x}-{\bf x}_0|^2+2\lambda
 n+\frac{4\delta\lambda {\bf x} \cdot ({\bf x}-{\bf x}_0)}
{1-\|{\bf x}\|^2} \nonumber
 \\
& - & 2\lambda{\bf k} \cdot ({\bf x}-{\bf x}_0)\bigg]
e^{-\lambda|{\bf x}-{\bf x}_0|^2 }
\end{eqnarray}
and
\begin{eqnarray}\label{m4}
E_2=\bigg[& - & \frac{2n\delta}{1-\|{\bf x}\|^2}+\frac{4\delta(\delta-1)\|{\bf x}\|^2}{(1-\|{\bf x}\|^2)^2}-\frac{4\delta^2\|{\bf x}\|^2}{(1-\|{\bf x}\|^2)^2} \\
& + & \nabla\cdot {\bf k} +\alpha\bigg](e^{-\lambda|{\bf x}-{\bf x}_0|^2 }-e^{-\lambda r_0^2})
\end{eqnarray}

Next, observe that $\|{\bf x}_0\|\leq\| \leq 1-d+r_1$,
so for any ${\bf x}\in B({\bf x}_0,r_0)$, one has 
$\|{\bf x}\|\leq  1-d+r_1+r_0\leq 1-d/2$.  
Therefore $1-\|{\bf x}\|^2\geq d(1-d/4)>0$.  Denote $d_0=1/(d-d^2/4)>0$; hence 
\beq\label{m5}
\frac{1}{1-\|{\bf x}\|^2}\leq d_0,\quad \forall {\bf x}\in B({\bf x}_0,r_0)
\eeq
Then
$$
E_1\leq \left[-\lambda^2 r_0^2+2\lambda n +4\delta\lambda r_0 d_0+2\lambda
\|{\bf k}\|_{L^\infty}r_0 \right]e^{-\lambda|{\bf x}-{\bf x}_0|^2 },
\quad \forall {\bf x} \; \text { such that } \; 
\frac {r_0} 2 \leq \| {\bf x} \| \leq r_0.
  $$
We also have
$$
E_2\leq|E_2|\leq \left[2n\delta d_0+4\delta(\delta-1)d_0^2+4\delta^2 d_0^2+\|\nabla\cdot{\bf k}\|_{L^\infty}+\alpha\right]e^{-\lambda|{\bf x}-{\bf x}_0|^2 }.  
  $$
which implies
\beq
\begin{split}
L_\alpha(w) & \leq \left[-\lambda^2 r_0^2 +\lambda(2n+4\delta r_0 d_0+2\|{\bf
  k}\|_{L^\infty}r_0)+2n\delta
d_0+(8\delta^2-4\delta)d_0^2+\|\nabla\cdot{\bf
  k}\|_{L^\infty}+\alpha\right]e^{-\lambda|{\bf x}-{\bf x}_0|^2},
\\
& \quad \forall {\bf x} \; \text { such that } \; 
\frac {r_0} 2 \leq \| {\bf x} \| \leq r_0.
\end{split}
\eeq
Then one may choose a $\lambda>0$ large enough (with $\lambda$ depending on 
$z_0$ 	and  $r_0$) so that the rhs be negative, i.e. $L_\alpha(w)\leq0$.  
Therefore 
\beq \label{m6}
L_\alpha(u-Aw)\geq0, \quad 
\forall {\bf x}\in B({\bf x}_0,r_0)-B({\bf x}_0,r_0/2), \quad \forall A>0.
\eeq
Next, as $u({\bf x})>0$ in $B({\bf x}_0,r_0)$ and $u$ is continuous in
$\Omega$ , one has $\displaystyle\mathop{\inf}_{{\bf x}\in \partial
  B({\bf x}_0,r_0/2)}u({\bf x})>0$.  Choose $A>0$ such that 
$\displaystyle\mathop{\inf}_{{\bf x}\in \partial B({\bf x}_0,r_0/2)}
u({\bf x})\geq A \displaystyle\mathop{\sup}_{{\bf x}\in \partial 
B({\bf x}_0,r_0/2)}w({\bf x})=A \left (
e^{-\lambda r_0^2/4}-e^{-\lambda r_0^2} \right )$.  
Then choose $A=\left[ \displaystyle\mathop{\inf}_{{\bf x}\in 
\partial B({\bf x}_0,r_0/2)}u({\bf x})\right]/\left 
[e^{-\lambda r_0^2/4}-e^{-\lambda r_0^2} \right]$, to get 
\beq
\nonumber
u\geq A w \quad \text { on } \quad \partial B({\bf x}_0,r_0/2).
\eeq
We also have
\beq 
\nonumber
u\geq A w \quad \text { on } \quad \partial B({\bf x}_0,r_0)
\eeq 
Then the inequality \eqref{m6} and the Comparison Principle give

\beq\label{m8}
u\geq A w, \quad \forall{\bf x}\in B({\bf x}_0,r_0)-B({\bf x}_0,r_0/2).
\eeq 
Next, the interior regularity property gives 
$u\in \mathcal{C}^1(\Omega)$.
Let  
$ \nu = \displaystyle\frac{{\bf y}_0-{\bf x}_0}{r_0}$ denote
the outward normal vector at ${\bf y}_0\in B({\bf x}_0,r_0)$.  
Then $ \displaystyle\frac{\partial u}{\partial\nu}({\bf y}_0)=
-\frac{1}{r_0}\displaystyle\mathop{\lim}_{\substack {s\to0 \\ s>0}}
\frac{1}{s×}u\left[{\bf y}_0-s({\bf y}_0-{\bf x}_0) × \right]× $.  
\\
With the help of inequality (\eqref{m8}) it easily follows that:
\beq\label{m9}
\frac{\partial u}{\partial\nu}({\bf y}_0)\leq - 2\lambda A 
r_0^2 e^{-\lambda r_0^2}<0.
\eeq
On the other hand now, ${\bf y}_0$ is an interior point at which 
$u$ reaches a minimum ($u({\bf x})\geq 0 $ on $\Omega$, 
$u({\bf y}_0)=0 $); this entails $\nabla u({\bf y}_0)=0$, hence 
$ \displaystyle\frac{\partial u}{\partial\nu}({\bf y}_0)=0$, which 
contradicts inequality \eqref{m9}.  This last argument ends the proof.    
\end{proof}
The main result of this section is
\begin{lemma}
\label{m10}

$$
B_\alpha (P_1 - \{0\}) \subset 
\stackrel{\circ}{P_1}.
  $$
\end{lemma}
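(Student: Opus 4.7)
The plan is to split the conclusion $u := B_\alpha f \in \stackrel{\circ}{P_1}$ into the two pointwise bounds $c_1 M \leq u \leq c_2 M$ on $\Omega$. The upper bound is essentially free: $f \in P_1 \subset X_1 \subset X_{1-1/\delta}$ by Remark \ref{sfs5}, so Lemma \ref{p41} gives $u \in X_1$, whence $u({\bf x}) \leq \|u\|_{X_1}\, M({\bf x})$ on $\Omega$. For pointwise positivity of $u$ on $\Omega$ I invoke Lemma \ref{m1}. Since $X_1 \subset X_0 = \mathcal{C}(\overline{\Omega})$, the function $u$ is continuous; hence for any fixed $r_0 \in [1/2, 1)$ the quantity $m_0 := \min_{\overline{\Omega_{r_0}}} u$ is strictly positive, and $u \geq m_0 \geq m_0 M$ on $\Omega_{r_0}$ since $M \leq 1$ on $\Omega$.

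The core of the proof is to propagate a lower bound of the form $c_1 M$ through the boundary shell $\Omega - \Omega_{r_0}$. Following the strategy of Lemma \ref{p41} but producing a \emph{subsolution} rather than a supersolution, I would take $\phi({\bf x}) := M({\bf x}) e^{-\lambda \|{\bf x}\|^2}$ with $\lambda = \lambda_0$ as in \eqref{lamb0}. A direct computation using $\nabla M = -2\delta\, {\bf x}\, M^{1-1/\delta}$ gives
\[
L_\alpha(\phi) = M e^{-\lambda \|{\bf x}\|^2} h_1({\bf x}) - 2\delta M^{1-1/\delta} e^{-\lambda \|{\bf x}\|^2}\bigl({\bf x}\cdot{\bf k} + 2\lambda \|{\bf x}\|^2\bigr),
\]
where $h_1$ is a bounded function on $\Omega$ depending on $\lambda_0, n, {\bf k}, \alpha$. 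The defining property of $\lambda_0$ in \eqref{lamb0} ensures ${\bf x}\cdot{\bf k} + 2\lambda_0 \|{\bf x}\|^2 \geq 1$ whenever $\|{\bf x}\| \geq 1/2$, so the second term dominates as $\|{\bf x}\| \to 1$ because of the blow-up $M^{-1/\delta} \to \infty$. Choosing $r_0 \in [1/2, 1)$ close enough to $1$ therefore yields $L_\alpha(\phi) \leq 0$ on $\Omega - \Omega_{r_0}$.

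The conclusion is reached via the Comparison Principle (Lemma \ref{est6}, Case 2) applied on $\Omega' := \Omega - \overline{\Omega_{r_0}}$. Observe that $\phi \in H_M^1$, since $\phi/M = e^{-\lambda_0 \|{\bf x}\|^2}$ is smooth on $\overline{\Omega}$. Pick $c_1 > 0$ small enough that $c_1 \phi \leq u$ on the inner boundary $\partial \Omega_{r_0}$: this is possible because $u \geq m_0$ there while $\phi \leq 1$, so any $c_1 \leq m_0$ works. Because $L_\alpha u = f \geq 0 \geq L_\alpha(c_1 \phi)$ on $\Omega'$, the Comparison Principle gives $u \geq c_1 \phi \geq c_1 e^{-\lambda_0} M$ on $\Omega - \Omega_{r_0}$. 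Combining with the interior bound $u \geq m_0 M$ on $\Omega_{r_0}$ yields $u \geq \min\{m_0, c_1 e^{-\lambda_0}\} M$ throughout $\Omega$, i.e., $u \in \stackrel{\circ}{P_1}$. The main obstacle is verifying that $\phi$ is a subsolution in the boundary shell: the leading boundary drift $\nabla M \cdot {\bf k} \sim -2\delta ({\bf x}\cdot{\bf k})\, M^{1-1/\delta}$ must be dominated by the opposing drift $+4\delta\lambda \|{\bf x}\|^2 M^{1-1/\delta}$ produced by the exponential factor, which is precisely why $\lambda_0$ was calibrated as in \eqref{lamb0}.
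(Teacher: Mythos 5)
Your proof is correct and takes a genuinely different, and arguably cleaner, route to the lower bound than the paper. Both proofs treat the upper bound via Lemma \ref{p41}, handle the interior via Lemma \ref{m1} and continuity, and push the lower bound through the boundary shell with the Comparison Principle; the difference lies in the choice of subsolution. The paper constructs the barrier $W_1=\bigl(e^{-\lambda\|{\bf x}\|^2}-e^{-\lambda}\bigr)^\delta$, which forces a careful power-series expansion in $y=1-\|{\bf x}\|^2$ and hinges on the observation that the $y^{\delta-2}$ coefficients cancel in $L_\alpha(W_1)$ (equations \eqref{m12-1}--\eqref{m14}). You instead take $\phi=M\,e^{-\lambda_0\|{\bf x}\|^2}$, i.e., you flip the sign of the exponent used in the supersolution $MW$ of Lemma \ref{p41}; the identical algebra then yields
\[
L_\alpha(\phi)=MW'\,h_1({\bf x})-2\delta M^{1-1/\delta}W'\bigl({\bf x}\cdot{\bf k}+2\lambda_0\|{\bf x}\|^2\bigr),\qquad W'=e^{-\lambda_0\|{\bf x}\|^2},
\]
with $h_1$ bounded, and the second term is $\leq -2\delta e^{-\lambda_0}M^{1-1/\delta}$ on $\|{\bf x}\|\geq 1/2$ by the choice of $\lambda_0$ in \eqref{lamb0}. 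Since $M^{1-1/\delta}/M\to\infty$ as $\|{\bf x}\|\to 1$, this negative term dominates uniformly for $\|{\bf x}\|$ close to $1$, giving $L_\alpha(\phi)\leq 0$ on $\Omega-\Omega_{r_0}$ for $r_0$ close enough to $1$, which is exactly what the Comparison Principle (Case 2, with $\Omega'=\Omega-\overline{\Omega_{r_0}}$) needs. Your barrier also satisfies $e^{-\lambda_0}M\leq\phi\leq M$ and $\phi/M\in\mathcal{C}^\infty(\overline{\Omega})$, so $\phi\in H_M^1$ is immediate. What your route buys is the avoidance of the Taylor expansion and the $y^{\delta-2}$ cancellation; what it reuses is the same $\lambda_0$ calibration already done for Lemma \ref{p41}, which makes the argument structurally symmetric (supersolution $Me^{+\lambda_0\|{\bf x}\|^2}$ for the upper bound, subsolution $Me^{-\lambda_0\|{\bf x}\|^2}$ for the lower bound). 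The one place to be slightly more explicit is the choice of $r_0$: you need $r_0\geq 1/2$ (so the sign of the leading $M^{1-1/\delta}$ term is controlled) and $1-r_0^2\leq 2\delta/\sup_\Omega|h_1|$ (so that term actually dominates $h_1 M$), but this is a routine quantification and does not affect the validity of the argument.
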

\begin{proof}
Since Lemma \ref{p41} gives $ B_\alpha (X_1) \subset X_1 $,
it suffices to prove that for any $f\in P_1, f\not\equiv 0$,
 there exists $ c > 0 $, such that
\beq\label{m11}
u({\bf x})\geq cM({\bf x}),\quad \forall {\bf x}\in\Omega
\eeq
where in the above $u=B_\alpha(f)$.
\\
The difficulty here is to lower bound $u$ in a neighborhood of the boundary of $\Omega$.
 
Let $W_0:\Omega\to\mathbb{R},\, W_0({\bf x})=e^{-\lambda\|{\bf x}\|^2}
-e^{-\lambda}$,  with \ $ \lambda > 0 $, and 
\\
$W_1: \Omega\to\mathbb{R},\,W_1({\bf x})=\left[ W_0({\bf x})
\right]^\delta×$.
Observe that $W_0({\bf x})=e^{-\lambda} \left[ 
e^{\lambda( 1 - \|{\bf x}\|^2)} × -1 \right]$;
using now the inequalities
$$
e^z - 1 \geq z \quad \text{ and } \quad e^z - 1 \leq z e^\lambda
\quad \text { with } \quad z = \lambda (1 - \|x\|^2) 
  $$
we deduce
\beq
\label{m12-0}
\lambda  e^{-\lambda} (1- \|{\bf x}\|^2)
\leq \left|W_0({\bf x})\right|\leq 
\lambda(1- \|{\bf x}\|^2), \quad \forall {\bf x} \in \Omega
\eeq
which implies
\beq
\nonumber
\lambda^\delta  e^{-\delta \lambda} M({\bf x})
\leq \left|W_1({\bf x})\right|\leq 
\lambda^\delta M({\bf x}), \quad \forall {\bf x} \in \Omega.
\eeq
Then  $W_1\in X_1$ which triggers $W_1\in L_M^2$.
\\
The followings hold true as well: $\nabla W_0=-2\lambda{\bf x}e^{-\lambda\|{\bf x}\|^2 }$ 
and $\nabla W_1=-2\lambda\delta{\bf x}e^{-\lambda\|{\bf x}\|^2 }
W_0^{\delta-1}$.  Inequality \eqref{m12-0} leads to
$\left|\nabla W_1 \right| \leq c 
M^{1-1/\delta}$.  Finally,  Proposition \ref{sfs31}
gives $W_1\in H_M^1$.  

We take on to proving $L_\alpha(u-A W_1) \equiv f-A L_\alpha(W_1)\geq0$ on 
$\Omega-\Omega_\eta$, where $ A > 0 $ and $\eta\in]0,1[$ will be chosen
later.
As $f\geq0$, we need to prove that $L_\alpha(W)\leq0$. 
One has:
$$
L_\alpha(W_1)=-\Delta W+\displaystyle\left(\frac{\nabla M}{×M}+
{\bf k} \right)\cdot \nabla W+ \left(\frac{\Delta M}{×M}-
\frac{|\nabla M|^2}{M^2×}+\nabla\cdot{\bf k}+\alpha  \right)W.
  $$  
Carrying out the calculations by making explicit $\nabla M$, 
$\Delta M$, etc, leads to 
\beq
\label{m12-1}
\begin{split}
L_\alpha (W_1) & =  2 \lambda \delta n e^{- \lambda \|{\bf x}\|^2}
W_0^{\delta - 1} - 4 \lambda^2 \delta \|{\bf x}\|^2
e^{- \lambda \|{\bf x}\|^2} W_0^{\delta - 1} -
4 \lambda^2 \delta (\delta - 1) \|{\bf x}\|^2
e^{- 2 \lambda \|{\bf x}\|^2} W_0^{\delta - 2}
\\
& + 4 \lambda \delta^2 \frac {\|{\bf x}\|^2} 
{1 - \|{\bf x}\|^2} e^{- \lambda \|{\bf x}\|^2} W_0^{\delta - 1}
- 2 \lambda \delta {\bf k} \cdot {\bf x} 
e^{- \lambda \|{\bf x}\|^2} W_0^{\delta - 1} - \frac {2 n \delta}
{1 - \|{\bf x}\|^2} W_0^{\delta}
\\
& + 4 \delta (\delta - 1) \frac {\|{\bf x}\|^2} 
{(1 - \|{\bf x}\|^2)^2} W_0^{\delta}
- 4 \delta^2  \frac  {\|{\bf x}\|^2} {1 - \|{\bf x}\|^2}
 W_0^{\delta}
+ (\nabla \cdot {\bf k} + \alpha) W_0^{\delta}
\end{split}
\eeq
Denote now $ y = 1 - \|{\bf x}\|^2 $.
Expanding about ``$y$ close to $0$'' leads to 
\\
$W_0=\lambda e^{-\lambda} y \left(1+\displaystyle\frac{\lambda y}{2×}
+\frac{(\lambda y)^2} 6 e^{z_1} \right )  $
with $z_1 \in [0, \lambda]$.   
\\
We then have, for any \ $\gamma>0$,
\beq
\label{m13}
 W_0^\gamma=\lambda^\gamma e^{-\lambda\gamma} y^\gamma
 \left(1+\displaystyle\frac{\lambda\gamma
     y}{2×}+ y^2 h(y, \lambda, \gamma) \right)
\eeq  
where, due to the fact that $ y \in [0, 1] $, $h$ is such that
$$
| h(y, \lambda, \gamma)| \leq {\bar h}
   $$
where ${\bar h}$ is a positive constant depending on $\lambda$ and
$\gamma$.
Next, for any $\gamma>0$, 
\beq
\label{m13p}
\displaystyle e^{- \gamma \lambda\|{\bf x}\|^2}=
\displaystyle e^{- \gamma \lambda} e^{\gamma \lambda y} =
e^{-\gamma \lambda}\left(1+ \gamma \lambda y +
\displaystyle\frac{(\gamma \lambda y)^2}{2×} e^{z_2} \right )
\eeq
with $ z_2 \in [0, \lambda \gamma]$.
\\

Expand the right-hand side of \eqref{m12-1} in power series w.r.t. $y$.  
Using \eqref{m13} and \eqref{m13p} and taking into account the equality
$ \|x\|^2 = 1 - y $, one remarks that the coefficients of the leading term $y^{\delta - 2}$
vanish, so after some lengthy (and awkward) algebra one gets: 
\begin{eqnarray}\label{m14}
 L_\alpha(W_1) & = & a_1({\bf x}, \lambda)y^{\delta-1}+a_2({\bf x},y, \lambda)
y^{\delta}\\
& \text {with} & \\
a_1({\bf x}, \lambda) & = & 2\lambda^\delta e^{-\delta\lambda}\left
(-\delta^2\lambda + 4\delta^2 - \delta{\bf x}\cdot{\bf k} \right) 
\end{eqnarray}
and $a_2$ a function satisfying
$$
|a_2({\bf x},y, \lambda)| \leq {\bar a}_2
  $$
where ${\bar a}_2$ is a positive constant depending in $\lambda$.
Next, 
\\
$ -\delta^2\lambda + 4\delta^2 -\delta{\bf x}\cdot{\bf k} 
\leq-\delta^2\lambda + 4\delta^2 +\delta\|{\bf k}\|_{L^\infty} $
and with a suitable choice for $\lambda$, such as:
\beq\label{m16}
\lambda=\frac{1}{\delta^2×}\left(4\delta^2+\delta\|{\bf k}\|_
{L^\infty}+1 \right) 
\eeq 
one gets $4\delta^2-\delta^2\lambda-\delta{\bf x}\cdot{\bf k} \leq -1$, which gives
\beq\label{m16}
a_1({\bf x}, \lambda)\leq -2\lambda^\delta e^{-\delta\lambda}.
\eeq
Therefore 
$$
L_\alpha(W_1) \leq - 2 \lambda^\delta e^{- \delta \lambda}
y^{\delta - 1} + {\bar a}_2 y^\delta = - 2 \lambda^\delta 
e^{- \delta \lambda} y^{\delta - 1} \left ( 1 - \frac {{\bar a}_2} 2
\lambda^{- \delta} e^{\delta \lambda} y \right ).
  $$
Then one may take $y$ small enough (i.e. $\|{\bf x}\|$
close to 1) such that \
$L_\alpha(W_1) \leq 0$.  
\\
It has thus been proved that $\exists \eta\in]0,1[$, close to $1$, 
such that 
\beq
\label{m16-1}
L_\alpha(u-A W_1)\geq0,\quad \forall {\bf x}\in \Omega-
\Omega_\eta,\quad \forall A>0.
\eeq 
Next, from Lemma \ref{m1} 
we have $u>0$ over $\Omega$. Since $u$ is also  continuous,
$A$ may be chosen such that $\displaystyle \mathop 
{\min}_{{\bf x}\in \overline{\Omega}_\eta}u({\bf x})\geq A 
\underbrace{\displaystyle \mathop {\max}_{{\bf x}\in 
\overline{\Omega}_\eta}W({\bf x})}_{\leq \,2^\delta}$.  
Take  $A=\displaystyle\dfrac{1}{2^\delta}\mathop {\min}_{{\bf x}
\in \overline{\Omega}_\eta}u({\bf x})$.  Such a choice leads to 

\beq\label{m17}
u({\bf x})\geq A W_1({\bf x}), \,\forall {\bf x}\in \overline{\Omega}_\eta
\eeq

Since $u,\, W_1 \in H_M^1$, use of equations \eqref{m16-1}, \eqref{m17} and of Comparison
Principle allows one to infer that $u({\bf x})\geq 
AW_1({\bf x}), \,\forall {\bf x}\in \Omega-\Omega_\eta$.  One more use 
of \eqref{m17} 
implies that, in fact, this inequality holds true  on the entire $\Omega$ domain. 
Now the inequality \eqref{m12-0} gives the result.

\end{proof}

\section{Proof of the main result}
\label{proof}

We are now in position to give the proof of Theorem \ref{est9-1}.
\\
{\bf Step 1}.
\\
From part $(ii)$ in Lemma \ref{p1}  one infers
$$
B_\alpha \in \mathscr{L}(X_1, \; X_{1/2 - 1/\delta}).
  $$
From Lemma \ref{p25} we obtain
$$
B_\alpha \in \mathscr{L}(X_{1/2 + (2j-1)/\delta}, \; 
X_{1/2 + (2j+1)/\delta}), \quad \text { for any } \;
j \in \{0, 1, \cdots j_0 \}
  $$
where
$$
j_0 = \max \left \{ j \in \mathbb{N}, \quad \frac 1 2 + \frac {2j+1}
\delta < 1 \right \}
  $$
(in other words, $j_0$ is the unique natural number belonging
to the interval \ $[\frac \delta 4 - \frac3 2 , \frac \delta 4 - \frac 1 2 [$).
\\
Due to the inequality
$ \frac 1 2 + \frac {2 j_0 + 1} \delta \geq 1 - \frac 2 \delta $ we have the inclusion
$ X_{1/2 + (2 j_0 + 1)/\delta} \subset X_{1 - 3/\delta} $. Using again Lemma \ref{p25} we
obtain
$$
B_\alpha^{j_0 + 2} \in \mathscr{L}(X_1, \; X_{1 - 1/\delta}).
  $$
Finally, from Lemma \ref{p41} we deduce
$$
S_\alpha \in \mathscr{L}(X_1, \; X_1)
  $$
where the following notation has been used:
$$
S_\alpha = B_\alpha^{j_0 + 3}.
   $$

As $B_\alpha$ is a compact operator that maps $X_1$ onto $X_{1/2 - 1/\delta}$ (see part $(ii)$ in Lemma \ref{p1}) it follows that $S_\alpha$ is compact as well.
\\
On the other hand, Lemma \ref{m10} gives
\beq
\label{incl-S}
S_\alpha(P_1 - \{0\}) \subset \stackrel{\circ}{P×}_1.
\eeq

We are now in a position that allows to make use of the strong version of the Krein-Rutman theorem
(see for example \cite{d-lio})
to the operator $S_\alpha$, the Banach space $X_1$ and the cone
$P_1$. One deduces the existence of an eigenvalue $ \mu_0 > 0 $
of $S_\alpha$, to which corresponds the eigenvector 
$u_0 \in \stackrel{\circ}{P×}_1$, that is 
\beq
\label{eigen-S}
B_\alpha^{j_0 + 3} u_0 = \mu_0 u_0.
\eeq
Moreover, if $u_1 \in \stackrel{\circ}{P×}_1 $ is any other
eigenvector of $S_\alpha$, related to a positive
eigenvalue, then $u_1$ is equal to $u_0$ up to a
multiplicative positive constant. 

{\bf Step 2} (Existence).
\\
Denote by $\text{Spr}(B_\alpha)$ the spectral radius of
$B_\alpha$, where $B_\alpha$ is considered an element of
$\mathscr{L}(L_M^2)$.
It is well known (see for example Section III.6.2 in \cite{kato}) that 
$$
\text{Spr}(B_\alpha) = \lim_{m \rightarrow + \infty}
\| B_\alpha^m \|_{\mathscr{L}(L_M^2)}^{1/m}.
  $$
Since $u_0 \in  L_M^2 $ and $ u_0 \neq 0 $, using 
(\eqref{eigen-S}) one has:
$$
\| B_\alpha^{m (j_0 + 3)} \|_{\mathscr{L}(L_M^2)} \geq
\frac { \| B_\alpha^{m (j_0 + 3)} u_0 \|_{L_M^2} }
{ \| u_0 \|_{L_M^2} } = \mu_0^m
  $$
which triggers
\beq
\label{sp-rad}
\text{Spr}(B_\alpha) \geq \mu_0^{1/(j_0 + 3)} > 0.
\eeq
On the other hand, let us denote by $P_M^2$ the 
(reproducible) cone of
positive functions in $L_M^2$.
Due to the Weak Maximum Principle (Lemma \ref{est3}), 
$$
B_\alpha (P_M^2) \subset P_M^2.
  $$
We also have that $B_\alpha$ is a compact operator from
$L_M^2$ onto itself (due to the compact embedding of 
$H_M^1$ in $L_M^2$, see \cite{ne}).
\\
Use now the weak version of the Krein-Rutman theorem
(see \cite{d-lio}) for the operator $B_\alpha$, 
the Banach space $L_M^2$ and the cone $P_M^2$.
It turns out there exists an eigenvalue $\tilde{\mu}_0 > 0$ of 
$ B_\alpha$ and a corresponding eigenvector
$ \tilde{u}_0 \in P_M^2 - \{0\} $, i.e.
$$
 B_\alpha  \tilde{u}_0 = \tilde{\mu}_0 \tilde{u}_0.
  $$
Clearly $\tilde{u}_0$ also belongs to $H_M^1$.
Moreover, any other eigenvalue $\mu \in \mathbb{C}$ of
$B_\alpha$ is such that
\beq
\label{maj-vp}
|\mu| \leq \tilde{\mu}_0.
\eeq
Let us now denote 
$ \tilde{\lambda}_0 = 1/\tilde{\mu}_0 - \alpha $;  $ \tilde{\lambda}_0$ is 
clearly an eigenvalue of $L$ related to the same eigenfunction
$\tilde{u}_0$.  It then follows:\\

$$
\int_\Omega M \nabla \left ( \frac {\tilde{u}_0} M \right )
\cdot \nabla \left ( \frac \varphi M \right ) \ud {\bf x}
- \int_\Omega {\bf k} \tilde{u}_0
\cdot \nabla \left ( \frac \varphi M \right ) \ud {\bf x}
= \tilde{\lambda}_0
\int_\Omega \frac { \tilde{u}_0 \varphi } M \ud {\bf x}, \quad \forall \;
\varphi \in H_M^1.
  $$

Set $ \varphi = M $ in the above equation, and since
$ \displaystyle\int_\Omega \tilde{u}_0 > 0 $ ( remark that
$ \tilde{u}_0 \in L^1(\Omega) $ by the obvious embedding
$L_M^2(\Omega)\displaystyle\mathop{\subset}_{\text{cont}}
L^1(\Omega) $),
we deduce that $\tilde{\lambda}_0 = 0$.
Then the following expression:
$$
\psi = \frac b {\displaystyle\int_\Omega \tilde{u}_0 \ud {\bf x}} \tilde{u}_0 
  $$
gives a solution of eq\eqref{vp1}.
\\
{\bf Step 3} (Uniqueness).
\\
Assume $\psi_1$ and $\psi_2$ are
two solutions to the problem \eqref{vp1}-\eqref{vp3}.
Then $\psi_1$ and $\psi_2$ are non-negative eigenvectors of 
operator $L$ corresponding to the eigenvalue 0. This implies
$$
B_\alpha \psi_m = \frac 1 \alpha \psi_m, \quad m= 1, 2
  $$
which gives
$$
S_\alpha \psi_m = \left ( \frac 1 \alpha \right )^{j_0 + 3} 
\psi_m, \quad m= 1, 2.
  $$
We then obtain $\psi_m \in X_1$ and
by (\eqref{incl-S}) we also have $ \psi_m \in \stackrel{\circ}{P×}_1, 
m = 1,2.$
Now by the uniqueness property of the strong version of the
Krein-Rutman theorem, there exists $ r > 0$ such that
$ \psi_1({\bf x}) = r \psi_2({\bf x}), \; \; \forall \,{\bf x}
\in \Omega$. Now since $\displaystyle\int_\Omega \psi_1 =\displaystyle\int_\Omega \psi_2
= b $  we obtain $r = 1$ which proves the uniqueness.

\begin{remark}
From inequality (\eqref{maj-vp}) one can deduce, proceeding in a classical manner, 
that 
$$
\text{Re}(\lambda) \geq 0 
  $$ 
for any other complex eigenvalue of operator $L$.
The eigenvalue 0 is then the {\bf principal eigenvalue} of the
operator $L$. Moreover, what is quite remarkable is the fact that $0$ is the principal eigenvalue of $L$ {\bf for any} function $ {\bf k}
\in (W^{1, \infty}(\Omega))^n $.   

\end{remark}

\section{Final comments}

We have offered a proof to the fact that the FENE dumbbell configurational distribution function diffusion equation - see the corresponding boundary value problem described in eqs \eqref{bp31}-\eqref{bp34}- has unique steady state solutions.  In doing so, we relied on the Krein - Rutman theory of elliptic operators.  

There are several motivations for this work.  In \cite{bird2} asymptotic solutions for the probability density diffusion equation - valid for slow flows - are presented, but no proof for the existence of such solutions is offered.  While in this work we have proved the existence of solutions to the diffusion equation for slow and fast flows (that is irrespective of whether the velocity gradient is ``small'' or ``large''), the questions related to the convergence of explicit expansion solutions given in \cite{bird2} (and in what functional space it occurs) are still to be addressed to the fullest.  Moreover, we expect our results to further the work in finding asymptotic solutions valid for ``large'' velocity gradients, i.e. for fast flows, for the FENE dumbbell model; a solution in such a case is known only for rigid dumbbells (see \cite{otf}).  

Now, the elastic (or rigid) dumbbell polymer chain models are certainly crude representations of the real chains.  That set aside, they do capture several - but not all - important features of viscoelastic flows (e.g. shear rate dependent viscosity, first normal stress difference).  Moreover, they owe a certain popularity among polymer scientists (from experimentalists to applied mathematicians) to their relative simplicity.  More realistic models use different chain representations.  For instance, Doi and Edwards \cite{de} developed the so-called tube model for melt systems, that makes use of the de Genne's reptation ideas of anisotropic chain diffusion.  Schweitzer and co-workers \cite{ken} developed a mode-coupling model in which the intermolecular structural constraints upon the motion of single macromolecules are modeled as a many body caging effect.  Ngai and Plazek developed their own coupling model \cite{np,ng}, very successful in predicting the thermo-rheological complexity.  This being said, bead-spring or bead-rod chain models still attract significant attention: see for example \cite{sch,siz,gh,cif,und,kro}.  For sure their full capabilities are still to be uncovered.

\section{Acknowledgements}

The authors thank Dr Jean-Francois Palierne, Laboratoire de Physique, Ecole Normale Sup\'erieure de Lyon, and Dr Vitaly Volpert, Directeur de Recherche au CNRS, Universit\'e Lyon-I, for useful talks on polymer molecular dynamics, and on elliptic operators, respectively.


\begin{thebibliography}{20}

\bibitem{bird2}  R. B. Bird, R. C. Armstrong, O. Hassager, Dynamics of Polymeric Liquids, Vol. 2: Kinetic Theories, J. Wiley \& Sons, New-York, 1987.

\bibitem{kirk1} J. G. Kirkwood, Macromolecules, edited by P. L. Auer, Gordon and Breach, New York, 1968.

\bibitem{war1}  H. R. Warner, Kinetic Theory and Rheology of Dilute Suspensions of Finitely Extendible Dumbbells, Ind. Eng. Chem. Fundam. 11 (3) (1972) 379 - 387.

\bibitem{chil}  M. D. Chilcott, J. M. Rallison, Creeping flow of dilute polymer solutions past cylinders and spheres, J. Non-Newtonian Fluid Mech. 29 (1988) 381 - 432.

\bibitem{her} M. Herrchen, H. C. \"Ottinger, A detailed comparison of various FENE dumbbell models, J. Non-Newtonian Fluid Mech. 68 (1) (1997) 17 - 42.

\bibitem{lar}  R. G. Larson, Constitutive Equations for Polymer Melts and Solutions, Butterworths, 1988.

\bibitem{ber} A. N. Beris, B. J. Edwards, Thermodynamics of Flowing Systems With Internal Microstructure, Oxford University Press, 1994.

\bibitem{hui1}  R. R. Huilgol, N. Phan-Thien, Fluid Mechanics of Viscoelasticity,  Elsevier, 1997.

\bibitem{faith} F. A. Morrison, Understanding Rheology, Oxford University Press, 2001.

\bibitem{ott1}  H. C. \"Ottinger, Beyond Equilibrium Thermodynamics, Wiley, 2006.


\bibitem{bird1}  R. B. Bird, R. C. Armstrong, O. Hassager, Dynamics of Polymeric Liquids, Vol. 1: Fluid Mechanics, J. Wiley \& Sons, New-York, 1987.


\bibitem{gil-tru} G. Gilbarg, N. Trudinger, Elliptic partial
differential equations of second order, Springer-Verlag, Berlin, 2001.


\bibitem{caj} I. Ciuperca, M. El Alaoui, M. Jai, On the optimal control of coefficients in elliptic problems.  Applications to the optimization of the head slides, ESAIM Control Optim. Calc. Var. 11(1) (2005) 101 - 121.



\bibitem{dlp} P. Degond, M. Lemou, M. Picasso, Viscoelastic fluid models derived from kinetic equations for polymers, SIAM J. Appl. Math. 62(5) (2002) 1501 - 1519.


\bibitem{zh} H. Zhang, P. Zhang, Local existence for the FENE-dumbbell model of polymeric fluids, Arch. Ratl. Mech. Anal. 181 (2006) 373 -
  400. 

\bibitem{lzz-08} F. Lin, P. Zhang, Z. Zhang, On the global existence
of smooth solution to the 2-D FENE Dumbell Model, Commun. Math. Phys.
277 (2008) 531-553  


\bibitem{ba} J. W. Barret, C. Schwab, E. S\"uli, Existence of global weak solutions for some polymeric flow models, Math. Model. Meth. Appl. Sci. 15(6) (2005) 939 - 983.  

\bibitem{jlb-04} B. Jourdain, T. Leli\`evre, C. Le Bris, Existence of
solution for a micro-macro model of polymeric fluid: the FENE model, 
Journal of Functional Analysis 209 (2004) 162-193.

\bibitem{du} Q. Du, C. Liu, P. Yu, FENE dumbbell model and its several linear and nonlinear closure approximations, Multiscale Model. Simul. 4(3) (2005) 709 - 731. 




\bibitem{trie} H. Triebel, Interpolation Theory, Function Spaces, Differential
Operators, 2nd edition, Johann Ambrosius Barth Verlag, 1995.




\bibitem{evans} L.C. Evans, Partial Differential Equations, AMS, 
Providence, RI, 1998.

\bibitem{d-lio} R. Dautray, J.L. Lions, 
Analyse math\'ematique et calcul num\'erique pour les sciences et les
techniques, Masson, Paris, 1964.

\bibitem{kato} T. Kato, Perturbation theory for linear operators, 
 New York, 1966.

\bibitem{ne} J. Ne$\check{c}$as, Les m\'ethodes dir\'ectes en th\'eorie des \'equations elliptiques, Masson, Paris, 1967.

\bibitem{otf} H. C. \"Ottinger, A note on rigid dumbbell solutions at high shear rates, J. Rheol. 32(2) (1988) 135 - 143.

\bibitem{de} M. Doi, S. F. Edwards, The theory of polymer dynamics, 3rd edition, Oxford, 1989.

\bibitem{ken} K. S. Schwartz, M. Fuchs, G. Szamel, M. Guenza, H. Tang, Polymer-mode-coupling theory of the slow dynamics of entangled macromolecular fluids, Macromol. Theory Simul. 6 (1997) 1037 - 1117.

\bibitem{np} K. L. Ngai, D. J. Plazek, Identification of different modes of molecular motion in polymers that cause thermorheological complexity, Rubber Chem. Technol. 68 (1995) 376 - 434.

\bibitem{ng}  K. L. Ngai, Dynamic and thermodynamic properties of glass-forming substances, J. Non-Cryst. Solids 275 (2000) 7 - 51.

\bibitem{sch}  C. Schneggenburger, M. Kr\"oger, S. Hess, An extended FENE dumbbell theory for concentration dependent shear-induced anisotropy in dilute polymer solutions, J. Non-Newtonian Fluid Mech. 62 (2-3) (1996) 235 - 251.

\bibitem{siz} R. Sizaire, G. Lielens, I. Jaumain, R. Keunings, V. Legat, On the hysteretic behaviour of dilute polymer solutions in relaxation following extensional flow, J. Non-Newtonian Fluid Mech. 82 (2-3) (1999) 233 - 253.

\bibitem{gh}  I. Ghosh, G. H. McKinley, R. A. Brown, R. C. Armstrong, Deficiencies of FENE dumbbell models in describing the rapid stretching of dilute polymer solutions, J. Rheol. 45 (3) (2001) 721 - 758.

\bibitem{cif} J. G. Hernandez Cifre, Th. M. A. O. M. Barenbrug, J. D. Schieber, B. H. A. A. van den Brule, Brownian dynamics simulation of reversible polymer networks under shear using a non-interacting dumbbell model, J. Non-Newtonian Fluid Mech. 113 (2-3) (2003) 73 - 96.

\bibitem{und}  P. T. Underhill, P. S. Doyle, On the coarse-graining of polymers into bead-spring chains, J. Non-Newtonian Fluid Mech. 122 (1-3) (2004) 3 - 31.

\bibitem{kro}  M. Kr\"oger, E. De Angelis, An extended FENE dumbbell model theory for concentration dependent shear-induced anisotropy in dilute polymer solutions: addenda, J. Non-Newtonian Fluid Mech. 125 (1) (2005) 87 - 90.















\end{thebibliography}
\end{document}